\newtheorem{lemma}{Lemma}[section]
\newtheorem{definition}{Definition}[section]
\newtheorem{assumption}{Assumption}[section]
\newtheorem{theorem}{Theorem}[section]
\newtheorem{proposition}{Proposition}[section]
\newtheorem{remark}{Remark}[section]
\newtheorem{example}{Example}[section]
\def\R{{\rm I\!R}}
\def\Argmin{\mathop{\rm Arg\,min}}
\def\argmin{\mathop{\rm arg\,min}}
\title{\sf Further properties of the forward-backward envelope with applications to difference-of-convex programming}
\author{
Tianxiang Liu\thanks{Department of Applied Mathematics, the Hong Kong Polytechnic University, Hong Kong. This author was supported partly by the AMSS-PolyU Joint Research Institute Postdoctoral Scheme. E-mail: {tiskyliu@polyu.edu.hk}.} 
\and
Ting Kei Pong \thanks{Department of Applied Mathematics, the Hong Kong Polytechnic University, Hong Kong.
This author was supported partly by Hong Kong Research Grants Council PolyU253008/15p. E-mail: {tk.pong@polyu.edu.hk}.}
}
\date{October 15, 2016}   
\begin{document}
\maketitle

\begin{abstract}
  In this paper, we further study the forward-backward envelope first introduced in \cite{FBE14} and \cite{FBE16} for problems whose objective is the sum of a proper closed convex function and a twice continuously differentiable
  possibly nonconvex function with Lipschitz continuous gradient. We derive sufficient conditions on the original problem for the corresponding forward-backward envelope to be a level-bounded and Kurdyka-{\L}ojasiewicz function with an exponent of $\frac12$; these results are important for the efficient minimization of the forward-backward envelope by classical optimization algorithms. In addition, we demonstrate how to minimize some difference-of-convex regularized least squares problems by minimizing a suitably constructed forward-backward envelope. Our preliminary numerical results on randomly generated instances of large-scale $\ell_{1-2}$ regularized least squares problems \cite{YLHX14} illustrate that an implementation of this approach with a limited-memory BFGS scheme usually outperforms standard first-order methods such as the nonmonotone proximal gradient method in \cite{WrNoFi09}.
\end{abstract}

\section{Introduction}

In this paper, we consider the following optimization problem
\begin{equation}\label{P0}
  \min_{x} \ f(x) + P(x),
\end{equation}
where $P:\R^n\to \R\cup\{\infty\}$ is a proper closed convex function, $f:\R^n\to \R$ is twice continuously differentiable and there exists $L > 0$ so that all eigenvalues of $\nabla^2 f(x)$ lie within the closed interval $[-L,L]$, for all $x\in \R^n$. We assume that the proximal mapping of $\tau P$ is easy to compute for any $\tau > 0$; see Section~\ref{sec2} for the definition of proximal mapping. This problem arises in various applications where $f$ is typically a loss function and $P$ is introduced to induce desirable structures in the solution. Concrete examples include many popular models in image processing \cite{Cham04}, as well as compressed sensing \cite{CaTa05,Don06}, where $f$ is the least squares loss function and $P$ is the $\ell_1$ norm or the indicator function of the $\ell_1$ norm ball.

Since these practical problems are often presented in large-scale, a lot of existing methods for solving \eqref{P0} are first-order methods, whose cost per iteration is low, thanks to the fact that the proximal mapping of $\tau P$ is easy to compute for any $\tau > 0$. We refer the readers to the recent exposition \cite{Tseng2010} for an overview of first-order methods such as the proximal gradient algorithm and its several accelerated variants for solving \eqref{P0} when $f$ is, in addition, convex. Despite the prominence of first-order methods, tremendous efforts have also been devoted to developing algorithms that can take into account second-order information to possibly accelerate convergence. A class of such algorithms involves replacing the computation of the proximal mapping in each iteration by the computation of a {\em scaled} proximal mapping. The resulting subproblem in each iteration then needs to be solved by an iterative scheme; see \cite{LeSuSa14} for an example of such algorithms, and \cite{FrGoh16} for efficient evaluation of scaled proximal mappings. Another class of algorithms involves using semi-smooth Newton techniques to solve a nonsmooth equation that characterizes optimality/stationarity; see Section~\ref{sec2} for the definition of stationary points. One example of such equations is
\begin{equation}\label{nonsmootheq}
x - {\rm prox}_{\gamma P}(x - \gamma \nabla f(x)) = 0,
\end{equation}
for some $\gamma > 0$, and we refer the readers to \cite{BCNO15,GrLo08,MiUl14,XLWZ16} for other variants and more detailed discussions. An interesting variant along this line was proposed recently in \cite{FBE14} (see also \cite{FBE16}), which considered a potential function whose gradient is {\em roughly} the left hand side of \eqref{nonsmootheq}. Specifically, the authors introduced the so-called forward-backward envelope for \eqref{P0}. The forward-backward envelope corresponding to \eqref{P0} is defined as
\begin{equation*}
  F_\gamma(x) = f(x) - \frac{\gamma}{2}\|\nabla f(x)\|^2 + P^\gamma(x - \gamma \nabla f(x)),
\end{equation*}
where $P^\gamma(u):= \inf\limits_y\left\{P(y) + \frac{1}{2\gamma}\|y - u\|^2\right\}$ and $\gamma \in (0,\frac1L)$. It was shown in \cite[Proposition~1]{FBE14} and \cite[Theorem~2.6]{FBE16} that $F_\gamma$ is smooth with its gradient given by
\begin{equation*}
\nabla F_\gamma(x) = \gamma^{-1}(I - \gamma \nabla^2 f(x))(x - {\rm prox}_{\gamma P}(x - \gamma \nabla f(x))).
\end{equation*}
In particular, because the proximal mapping of $\gamma P$ is easy to compute by assumption, $\nabla F_\gamma(x)$ will be easy to compute if $\nabla^2 f(x)$ is simple, say, when $f$ is a quadratic.
Comparing the expression of $\nabla F_\gamma$ with \eqref{nonsmootheq}, it is not hard to see that (as was established in \cite[Proposition~1]{FBE14} and \cite[Theorem~2.6]{FBE16}), when $\gamma \in (0,\frac1L)$, $\nabla F_\gamma(x) = 0$ if and only if \eqref{nonsmootheq} holds. Thus, when $\gamma \in (0,\frac1L)$, one can find a stationary point of \eqref{P0} by finding a stationary point of $F_\gamma$. If $F_\gamma$ is level bounded, a stationary point can then be found by many standard and classical optimization methods such as nonlinear conjugate gradient, quasi-Newton methods, etc., with standard (monotone or non-monotone) line-search procedures such as the Armijo rule, thanks to the smoothness of $F_\gamma$. In \cite{FBE14}, based on $F_\gamma$, the authors proposed a (semi-smooth) Newton-type method with standard line-search procedures for minimizing $F_\gamma$ when $f$ is convex and $\gamma \in (0,\frac1L)$; very recently, in \cite{FBE16}, the authors proposed a special algorithm for solving \eqref{P0} based on $F_\gamma$ which alternates between a proximal gradient step and a descent direction, and established convergence under suitable assumptions. The numerical performances of the above approaches were promising.

In this paper, unlike \cite{FBE16} that designs a special algorithm for solving \eqref{P0}, we further study properties of $F_\gamma$ for the efficient application of classical/standard optimization algorithms. Specifically, we discuss sufficient conditions on \eqref{P0} that guarantee $F_\gamma$ to be a level-bounded and Kurdyka-{\L}ojasiewicz (KL) function with an exponent of $\frac12$; the latter property is crucial for establishing local linear convergence of a wide variety of standard optimization methods for minimizing $F_\gamma$. Furthermore, we illustrate how some difference-of-convex regularized least squares problems can be reformulated in the form of \eqref{P0} and give conditions on the regularizer so that the corresponding $F_\gamma$ is level bounded. Consequently, various techniques for smooth unconstrained optimization including limited-memory BFGS can now be adapted to solve a class of difference-of-convex regularized least squares problems. Our numerical results show that minimizing a suitable $F_\gamma$ using the limited-memory BFGS direction with Armijo line-search usually outperforms standard first-order methods when solving large-scale $\ell_{1-2}$ regularized least squares problems studied in \cite{YLHX14} with simulated data.

The rest of the paper is organized as follows. In Section~\ref{sec2}, we introduce notation and review some basic properties of $F_\gamma$ of \eqref{P0} discussed in \cite{FBE14} and \cite{FBE16}. Level-boundedness and the KL property of the envelope are studied in Section~\ref{sec:FBE}, and the adaptation of the theory to minimize difference-of-convex regularized least squares problems is presented in Section~\ref{sec:DC}. Numerical results are presented in Section~\ref{sec:num}. Finally, we give some concluding remarks in Section~\ref{sec:conclusion}.

\section{Notation and preliminaries}\label{sec2}

We use $\R^n$ to denote the $n$-dimensional Euclidean space. The standard inner product is denoted by $\langle \cdot,\cdot \rangle$, and the induced norm is denoted by $\|\cdot\|$. In addition, for any $x\in \R^n$, its $i$th entry is denoted by $x_i$, the number of nonzero entries is denoted by $\|x\|_0$, and the $p$-norm, $p \in [1,\infty)$, is denoted by $\|x\|_p:= \left(\sum_{i=1}^n|x_i|^p\right)^{\frac1p}$. We also let $B(x,r)$ denote the closed ball centered at $x$ with radius $r > 0$, and let $U(x,r)$ denote its interior. For a nonempty closed set $\Omega$, we let ${\rm dist}(x,\Omega) := \inf_{y\in \Omega}\|x - y\|$ and use ${\rm Proj}_\Omega(x)$ to denote the set of points in $\Omega$ that are closest to $x$. Such a set reduces to a singleton set when $\Omega$ is convex. Furthermore, we use $\delta_\Omega$ to denote the indicator function of the closed set $\Omega$, i.e.,
\[
\delta_\Omega(x) = \begin{cases}
  0 & {\rm if}\ x\in \Omega,\\
  \infty & {\rm otherwise}.
\end{cases}
\]
For an extended-real-valued function $g:\R^n\to[-\infty,\infty]$, we denote its domain by ${\rm dom}\,g := \{x\in \R^n:\; g(x) < \infty\}$. Such a function is called proper if ${\rm dom}\,g\neq \emptyset$ and $g$ is never $-\infty$. A proper function $g$ is said to be closed if it is lower semicontinuous, and is said to be convex if its epigraph, ${\rm epi}\,g:= \{(x,r)\in \R^n\times \R:\; r\ge g(x)\}$, is a convex set.

We say that a proper closed function $g$ is level bounded if $\{x\in \R^n:\; g(x)\le \gamma\}$ is bounded for all $\gamma \in \R$,\footnote{This is equivalent to $\liminf_{\|x\|\to\infty}g(x) = \infty$; see \cite[Page~83]{AuT03}.} and is coercive if
\[
\liminf_{\|x\|\to\infty}\frac{g(x)}{\|x\|} > 0.
\]
Coercive functions are clearly level bounded. Moreover, it is known that for a proper closed convex function, it is level bounded if and only if it is coercive; see, for example, \cite[Proposition~3.1.3]{AuT03}. For a proper closed convex function $g$,
the subdifferential of $g$ at any $x\in {\rm dom}\,g$ is defined as
\[
\partial g(x) := \{u\in \R^n:\; g(y) - g(x)\ge \langle u,y-x\rangle\ \ \forall y\in \R^n\},
\]
and its proximal mapping is defined as
\[
{\rm prox}_g(x) := \argmin_y \left\{g(y) + \frac12 \|y - x\|^2\right\}.
\]
It is well known that the proximal mapping is a Lipschitz continuous single-valued mapping with a Lipschitz constant $1$, and that
\begin{equation}\label{prox_equi}
  u = {\rm prox}_g(x)\ \ \Longleftrightarrow \ \ x - u \in \partial g(u).
\end{equation}
Finally, we define ${\rm dom}\,\partial g:=\{x\in \R^n:\; \partial g(x)\neq \emptyset\}$.

For problems of the form \eqref{P0}, it is known that any local minimizer $x^*$ has to satisfy
\begin{equation}\label{sub_rule}
0 \in \nabla f(x^*) + \partial P(x^*);
\end{equation}
see \cite[Theorem~10.1]{Rock98} and \cite[Exercise~8.8]{Rock98}. Using \eqref{prox_equi}, it is not hard to see that \eqref{sub_rule} is equivalent to
\begin{equation}\label{sub_rule2}
  x^* = {\rm prox}_{\gamma P}(x^* - \gamma \nabla f(x^*))
\end{equation}
for any $\gamma > 0$. We say that $\bar x$ is a stationary point of \eqref{P0} if \eqref{sub_rule} (or equivalently \eqref{sub_rule2} for some $\gamma > 0$) is satisfied with $\bar x$ in place of $x^*$.
The set of stationary points of \eqref{P0} is denoted by ${\cal X}$. One can show that if $f+P$ is coercive, then ${\cal X}\neq \emptyset$.

For the class of problem \eqref{P0}, a brilliant way of solving it was proposed in \cite{FBE14} and \cite{FBE16}, which is based on the following function called the forward-backward envelope:
\begin{equation*}
\begin{split}
  F_\gamma(x) &:= \inf_y \left\{f(x) + \langle\nabla f(x),y-x\rangle + \frac{1}{2\gamma}\|y - x\|^2 + P(y)\right\}\\
  & = f(x) - \frac{\gamma}{2}\|\nabla f(x)\|^2 + P^\gamma(x - \gamma \nabla f(x)),
\end{split}
\end{equation*}
where $P^\gamma(u):= \inf\limits_y\left\{P(y) + \frac{1}{2\gamma}\|y - u\|^2\right\}$ and $\gamma \in (0,\frac1L)$. This function can be interpreted as a generalized Moreau envelope with a suitable Bregman distance. Indeed, recall that for a differentiable convex function $\phi$, the Bregman distance $D_\phi(y,x)$ is defined as
\[
D_\phi(y,x) := \phi(y) - \phi(x) - \langle\nabla \phi(x),y-x\rangle.
\]
If we take $\phi(x) = \frac{1}{2\gamma}\|x\|^2 - f(x)$, then $\phi$ is a convex differentiable function. Moreover, it is not hard to show from the definitions that
\begin{equation}\label{FBE2}
F_\gamma(x) = \inf_y \left\{f(y) + P(y) + D_\phi(y,x)\right\}.
\end{equation}
Hence, $F_\gamma$ is just a generalized Moreau envelope that uses a suitable Bregman distance in place of the square of Euclidean distance. We refer the readers to \cite{BBC03,KS12} for more details on the generalized Moreau envelope and its properties.

It was shown in \cite[Proposition~1]{FBE14}, \cite[Proposition~2.3]{FBE16} and \cite[Theorem~2.6]{FBE16} that $F_\gamma$ enjoys the following nice properties:
\begin{enumerate}[{\rm (i)}]
  \item $F_\gamma$ is continuously differentiable for any $\gamma \in (0,\frac1L)$, with its gradient given by
\begin{equation}\label{gradFgamma}
\nabla F_\gamma(x) = \gamma^{-1}(I - \gamma \nabla^2 f(x))(x - {\rm prox}_{\gamma P}(x - \gamma \nabla f(x))).
\end{equation}
  \item When $\gamma \in (0,\frac1L)$, the set of stationary points of $F_\gamma$ equals $\cal X$, and the set of global minimizers of $F_\gamma$ equals that of $f+P$.
\end{enumerate}
Consequently, in order to minimize \eqref{P0}, as suggested in \cite{FBE14} and \cite{FBE16}, it is natural to consider the following possibly nonconvex problem for a fixed $\gamma \in (0,\frac1L)$:
\begin{equation}\label{P3}
\min_x \ \ F_\gamma(x),
\end{equation}
which is an unconstrained optimization problem with a smooth objective and is potentially solvable by many classical optimization algorithms such as quasi-Newton methods with standard line-search strategies. As discussed in the introduction of \cite{FBE14} and \cite{FBE16}, such an approach is in spirit similar to the merit function approach for solving variational inequality problems. We will discuss more properties of $F_\gamma$ in the next section.

Before ending this section, we state the following Kurdyka-{\L}ojasiewicz (KL) property, which is a useful property for establishing convergence rate of optimization algorithms; see, for example, \cite{AB09,Jerome,LiP16}. This property will be further studied in the next section. Its definition given below is adapted from \cite[Definition~2.3]{LiP16}.
\begin{definition}\label{KL}{\bf (Kurdyka-{\L}ojasiewicz property for smooth functions)}
  For $\theta \in (0,1)$, we say that a smooth function $g$ is a Kurdyka-{\L}ojasiewicz (KL) function with an exponent of $\theta$ if for any $\bar x\in \R^n$, there exist $c$ and $\epsilon > 0$ so that
  \[
  c(g(x) - g(\bar x))^\theta\le \|\nabla g(x)\|
  \]
  whenever $\|x-\bar x\|< \epsilon$ and $g(\bar x)\le g(x)$.
\end{definition}

\section{Further properties of the forward-backward envelope}\label{sec:FBE}

In this section, we further study properties of the forward-backward envelope $F_\gamma$ of \eqref{P0} beyond those already known in \cite{FBE14} and \cite{FBE16}. Specifically, we show that under certain mild conditions on the original problem \eqref{P0}, the function $F_\gamma$ is level bounded, which would guarantee boundedness of the sequence generated by standard descent methods for solving \eqref{P3}. In addition, we also establish that $F_\gamma$ has the KL property under suitable assumptions on the original problem \eqref{P0}. These results will then be applied to establishing convergence of some classical optimization algorithms applied to minimizing $F_\gamma$.

We start by giving a sufficient condition for the level-boundedness of $F_\gamma$.

\begin{theorem}\label{thm1}{\bf (Level-boundedness of $F_\gamma$)}
  If $f+P$ is coercive and $\gamma \in (0,\frac1L)$, then $F_\gamma$ is level bounded.
\end{theorem}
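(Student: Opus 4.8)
The plan is to bound $F_\gamma$ from below by a function that is visibly level bounded. I start from the variational description
\[
F_\gamma(x) = \inf_y\left\{f(x) + \langle \nabla f(x),y-x\rangle + \frac{1}{2\gamma}\|y-x\|^2 + P(y)\right\}.
\]
Since every eigenvalue of $\nabla^2 f$ lies in $[-L,L]$, the standard descent-type estimate gives $f(x) + \langle \nabla f(x),y-x\rangle \ge f(y) - \frac{L}{2}\|y-x\|^2$ for all $x,y\in\R^n$. Substituting this and writing $c_0 := \frac12(\gamma^{-1}-L)$, which is positive because $\gamma\in(0,\frac1L)$, I obtain
\[
F_\gamma(x) \ge \inf_y\left\{f(y) + P(y) + c_0\|y-x\|^2\right\} =: h(x).
\]
Because the sublevel sets of $F_\gamma$ are contained in those of $h$, it then suffices to prove that $h$ is level bounded.

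To analyze $h$, I would first record the relevant structural facts: $f+P$ is proper (as $P$ is proper and $f$ is finite-valued), closed (as $f$ is continuous and $P$ is lower semicontinuous), and level bounded (being coercive), so by the Weierstrass theorem it attains its infimum and $\beta := \min(f+P)$ is finite. For each fixed $x$, the map $y\mapsto f(y)+P(y)+c_0\|y-x\|^2$ is likewise proper and closed, and is level bounded since it is bounded below by $\beta + c_0\|y-x\|^2\to\infty$ as $\|y\|\to\infty$; hence the infimum defining $h(x)$ is attained at some point $y(x)$. Now fix $\alpha\in\R$ and suppose $h(x)\le\alpha$, i.e.\ $(f+P)(y(x)) + c_0\|y(x)-x\|^2 \le \alpha$. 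Using $f+P\ge\beta$ on the first term yields $\|y(x)-x\|\le\sqrt{(\alpha-\beta)/c_0}$, while dropping the nonnegative quadratic term yields $(f+P)(y(x))\le\alpha$; since $f+P$ is level bounded, $y(x)$ then lies in a bounded set depending only on $\alpha$. Combining these two bounds through the triangle inequality shows $\|x\|$ is bounded by a constant depending only on $\alpha$, so $\{x:h(x)\le\alpha\}$ is bounded, proving $h$ — and therefore $F_\gamma$ — level bounded.

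I expect the only genuinely delicate point to be the bookkeeping in the last step, where one must use both the lower-boundedness of $f+P$ (to control $\|y(x)-x\|$) and its level-boundedness (to control $\|y(x)\|$); the descent-lemma inequality and the attainment of the two infima are routine. One could alternatively run the argument contrapositively through a sequence $x_k$ with $\|x_k\|\to\infty$ and $F_\gamma(x_k)$ bounded above, deriving a contradiction with the coercivity of $f+P$, but the sublevel-set formulation above avoids passing to subsequences.
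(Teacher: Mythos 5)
Your proposal is correct. The first half coincides with the paper's argument: the paper derives the same lower bound $F_\gamma(x)\ge \inf_y\{f(y)+P(y)+\tfrac12(\gamma^{-1}-L)\|y-x\|^2\}$ via the Bregman-distance representation of $F_\gamma$, which is exactly your descent-lemma estimate in different clothing. Where you diverge is in showing that this minorant $h$ is level bounded. The paper exploits the quantitative linear growth coming from coercivity (choosing $r,\alpha$ with $(f+P)(x)>\tfrac{\alpha}{2}\|x\|$ for $\|x\|>r$), splits the infimum over $\{\|y\|\le r\}$ and $\{\|y\|>r\}$, and computes an explicit piecewise lower bound $G(x)$ tending to infinity. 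You instead argue qualitatively on sublevel sets: from $h(x)\le\alpha$ and the attainment of the inner infimum you extract both $\|y(x)-x\|\le\sqrt{(\alpha-\beta)/c_0}$ and $(f+P)(y(x))\le\alpha$, and conclude by the triangle inequality. Your finish is cleaner and in fact uses strictly less than the paper does: you only need $f+P$ to be proper, closed, bounded below and level bounded, not coercive in the $\liminf (f+P)(x)/\|x\|>0$ sense, so your argument proves the theorem under a weaker hypothesis (for nonconvex $f$ these notions genuinely differ, e.g.\ functions growing like $\log(1+\|x\|)$). The paper's explicit computation, on the other hand, yields a concrete quantitative minorant of $F_\gamma$, which could be reused if one wanted growth rates rather than mere level-boundedness. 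One small remark: the attainment of the inner infimum at $y(x)$ is convenient but not essential; taking $\epsilon$-minimizers would serve equally well.
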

\begin{remark}
  Recall from \cite[Proposition~3.1.3]{AuT03} that a proper closed convex function is level bounded if and only if it is coercive. Thus, when $f$ is in addition convex, we see from Theorem~\ref{thm1} that, if $f+P$ is level bounded and $\gamma \in (0,\frac1L)$, then $F_\gamma$ is level bounded.
\end{remark}
\begin{proof}
  First, notice that $f$ has a Lipschitz continuous gradient with a Lipschitz constant of $L > 0$. From this, it is not hard to show that for any $x$ and $y$,
  \begin{equation*}
    D_\phi(y,x)\ge \frac12\left(\frac{1}{\gamma} - L\right)\|y - x\|^2
  \end{equation*}
  when $\phi(\cdot) = \frac{1}{2\gamma}\|\cdot\|^2 - f(\cdot)$ and $\gamma \in (0,\frac1L)$.
  Combining this with \eqref{FBE2}, we see further that
  \begin{equation}\label{fbound2}
    \begin{split}
      F_\gamma(x)& = \inf_y \left\{f(y) + P(y) + D_\phi(y,x)\right\} \ge \inf_y \left\{f(y) + P(y) + \frac12\left(\frac{1}{\gamma} - L\right)\|y - x\|^2\right\}.
    \end{split}
  \end{equation}
  Next, from the definition of coerciveness, there are $r > 0$ and $\alpha > 0$ so that
  \[
  (f + P)(x) > \frac\alpha{2} \|x\|
  \]
  whenever $\|x\| > r$. Moreover, since $f + P$ is proper lower semicontinuous, it follows that there exists $\beta > -\infty$ so that
  \[
  \inf_{\|x\| \le r} (f + P)(x) = \beta.
  \]
  These together with \eqref{fbound2} give further that
  \begin{equation*}
  \begin{split}
    F_\gamma(x) &\ge \min\left\{\inf_{\|y\|\le r}\left\{\beta + \frac12\left(\frac{1}{\gamma} - L\right)\|y - x\|^2\right\},
    \inf_{\|y\|> r}\left\{\frac{\alpha}{2}\|y\| + \frac12\left(\frac{1}{\gamma} - L\right)\|y - x\|^2\right\}\right\}\\
    & = \min\left\{\beta + \frac12\left(\frac{1}{\gamma} - L\right)(\|x\| - r)_+^2,G(x)\right\},
  \end{split}
  \end{equation*}
  where
  \[
  G(x) = \begin{cases}
    \frac{\alpha r}{2} + \frac12\left(\frac1\gamma - L\right)(\|x\|-r)^2 & {\rm if}\ \|x\| - \frac{\alpha}{2\left(\frac1\gamma-L\right)} \le r,\\
    \frac\alpha2 \|x\| - \frac{\alpha^2}{8\left(\frac1\gamma-L\right)} & {\rm else}.
  \end{cases}
  \]
  From these one easily obtains $\liminf_{\|x\|\to\infty}F_\gamma(x) = \infty$. This completes the proof.
\end{proof}

We next derive conditions under which $F_\gamma$ is a KL function. The KL property is useful for establishing convergence of the whole sequence generated by some classical descent algorithms for minimizing $F_\gamma$; see, for example, \cite{AB09,Jerome,AtBoSv13}. In addition, it is known that the so-called KL exponent (the $\theta$ in Definition~\ref{KL}) is closely related to the local convergence rate of these algorithms; see, for example, \cite{AB09,Jerome,LiP16} and references therein. Specifically, an exponent $\theta\in (0,\frac12]$ typically implies local linear convergence of the sequence of function values generated to a stationary value. Hence, it will be useful to derive sufficient conditions that guarantee $F_\gamma$ to be a KL function with an exponent of $\frac12$. We achieve this by considering the following error bound condition, which was first introduced in a series of papers \cite{Luo1992,Luo1993,Tseng1993} for establishing local linear convergence of the matrix splitting algorithms and some classical feasible descent algorithms.

\begin{assumption}\label{assum1}{\bf (Error bound condition)}
Suppose that ${\cal X}\neq \emptyset$, and that for any $\xi \ge \inf (f + P)$, there exist $\epsilon > 0$ and $\tau > 0$ so that
    \[
    {\rm dist}(x,{\cal X}) \le \tau \|x - {\rm prox}_P(x - \nabla f(x))\|
    \]
    whenever $\|x - {\rm prox}_P(x - \nabla f(x))\| < \epsilon$ and $f(x) + P(x)\le \xi$.
\end{assumption}

It is known that if $\inf(f+P)>-\infty$ and the error bound condition holds together with a condition concerning the separation of stationary values, then many first-order methods applied to solving \eqref{P0} can be shown to be locally linearly convergent; see, for example, \cite{Luo1993,TseY09}. In addition, when $\cal X\neq \emptyset$, this error bound condition is satisfied by a large class of functions that arise in practical applications; for example, it is satisfied when $f$ is a (not necessarily convex) quadratic function and $P$ is a proper polyhedral function; see \cite[Theorem~4]{TseY09}. We refer the readers to \cite{Luo1992,Luo1993,Tseng1993,TseY08,TseY09,Tseng2010} and references therein for more examples.

In the next theorem, we show that if the error bound condition holds, the function $f$ is analytic and the function $P$ is continuous on ${\rm dom}\,\partial P$, bounded below and subanalytic (see, for example \cite[Definition~2.1]{BDL06} and \cite[Definition~6.6.1]{Finite_VI1}), then $F_\gamma$ is a KL function with an exponent of $\frac12$; see Theorem~\ref{thm2} (b) below. Using this latter property together with some standard assumptions, many first-order methods applied to solving \eqref{P3} can be shown to be locally linearly convergent, as we will also demonstrate later in Proposition~\ref{convergence}.

We need the following auxiliary lemma for proving our theorem. 
\begin{lemma}\label{lem1}
  Let $\gamma \in (0,\frac1L)$ and define
  \begin{equation*}
  {\cal P}_\gamma(x) = {\rm prox}_{\gamma P}\left(x - \gamma \nabla f(x)\right).
  \end{equation*}
  Then for any $x$ and $y$, we have
  \[
  \|{\cal P}_\gamma(x) - {\cal P}_\gamma(y)\| \le 2\|x - y\|.
  \]
  In particular, if $\bar x\in \cal X$, then $\|{\cal P}_\gamma(x) - \bar x\| \le 2\|x - \bar x\|$.
\end{lemma}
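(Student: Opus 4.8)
The plan is to bound the Lipschitz constant of the forward--backward operator $\mathcal{P}_\gamma$ by composing two elementary facts: the proximal mapping $\mathrm{prox}_{\gamma P}$ is nonexpansive (Lipschitz constant $1$, as recalled in Section~\ref{sec2}), and the forward step $T(x) := x - \gamma\nabla f(x)$ is Lipschitz with constant at most $2$. Granting the latter, the chain rule for Lipschitz moduli gives $\|\mathcal{P}_\gamma(x) - \mathcal{P}_\gamma(y)\| = \|\mathrm{prox}_{\gamma P}(T(x)) - \mathrm{prox}_{\gamma P}(T(y))\| \le \|T(x) - T(y)\| \le 2\|x-y\|$, which is the desired estimate.

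To establish the bound on $T$, observe that $\nabla T(x) = I - \gamma\nabla^2 f(x)$. Since by hypothesis all eigenvalues of $\nabla^2 f(x)$ lie in $[-L,L]$, the eigenvalues of $I - \gamma\nabla^2 f(x)$ lie in $[1-\gamma L,\, 1+\gamma L]$; because $\gamma \in (0,\frac1L)$ we have $\gamma L \in (0,1)$, so $\|I - \gamma\nabla^2 f(x)\| \le 1 + \gamma L < 2$ for every $x$. Then writing $T(x) - T(y) = \int_0^1 \nabla T\big(y + t(x-y)\big)(x-y)\,dt$ and taking norms yields $\|T(x) - T(y)\| \le (1+\gamma L)\|x-y\| \le 2\|x-y\|$.

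For the ``in particular'' assertion, recall from \eqref{sub_rule2} that $\bar x \in \mathcal{X}$ is equivalent to $\bar x = \mathrm{prox}_{\gamma P}(\bar x - \gamma\nabla f(\bar x)) = \mathcal{P}_\gamma(\bar x)$. Applying the estimate just proved with $y$ replaced by $\bar x$ gives $\|\mathcal{P}_\gamma(x) - \bar x\| = \|\mathcal{P}_\gamma(x) - \mathcal{P}_\gamma(\bar x)\| \le 2\|x-\bar x\|$, as claimed.

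There is no genuinely hard step here; the only point requiring care is that the spectral-norm bound $\|I - \gamma\nabla^2 f(x)\| < 2$ uses \emph{both} the upper bound $L$ on the eigenvalues of $\nabla^2 f$ and the restriction $\gamma < \frac1L$ (which forces $1+\gamma L < 2$). I would present the proof essentially in the two-line form above, perhaps inserting a brief remark that the argument in fact gives the sharper constant $1+\gamma L$, the value $2$ being a convenient uniform bound used in the sequel.
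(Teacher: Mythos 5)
Your proof is correct and follows essentially the same route as the paper's: nonexpansiveness of $\mathrm{prox}_{\gamma P}$ composed with the $2$-Lipschitz bound on the forward step $x\mapsto x-\gamma\nabla f(x)$, plus the fixed-point identity ${\cal P}_\gamma(\bar x)=\bar x$ for $\bar x\in\cal X$. The only difference is that you spell out the spectral estimate $\|I-\gamma\nabla^2 f(x)\|\le 1+\gamma L<2$, which the paper leaves implicit, and you note the sharper constant $1+\gamma L$.
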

\begin{proof}
  For any $x$ and $y$, we have
  \[
  \|{\cal P}_\gamma(x) - {\cal P}_\gamma(y)\|\le \left\|\left(x - \gamma \nabla f(x)\right) - \left(y - \gamma \nabla f(y)\right)\right\| \le 2\|x - y\|,
  \]
  where the first inequality is a consequence of the nonexpansiveness of proximal mappings (see, for example, \cite[Proposition~12.27]{BauCom11}), and the second inequality follows from the fact that $x \mapsto x - \gamma \nabla f(x)$ is Lipschitz continuous with a Lipschitz constant of $2$. This proves the first inequality. The conclusion concerning stationary points follows from this and the fact that ${\cal P}_\gamma(\bar x) = \bar x$ for any stationary point $\bar x\in \cal X$. 
\end{proof}

\begin{theorem}{\bf (Kurdyka-{\L}ojasiewicz property of $F_\gamma$)}\label{thm2}
  Suppose that the function $f$ is analytic, and $P$ is continuous on ${\rm dom}\,\partial P$, subanalytic and bounded below (i.e., $\inf P > -\infty$).
  Then the following statements hold for any $\gamma \in (0,\frac1L)$ and $\bar x\in \R^n$.
  \begin{enumerate}[{\rm (a)}]
    \item There exist an exponent $\theta\in(0,1)$ and $c$, $\epsilon > 0$ so that
  \begin{equation}\label{eq}
  c|F_\gamma(x) - F_\gamma(\bar x)|^\theta \le \|\nabla F_\gamma(x)\|
  \end{equation}
  whenever $\|x - \bar x\| < \epsilon$.
    \item Suppose in addition that Assumption~\ref{assum1} holds. Then there exist $c$, $\epsilon > 0$ so that
  \begin{equation}\label{eq1}
  c\sqrt{F_\gamma(x) - F_\gamma(\bar x)} \le \|\nabla F_\gamma(x)\|
  \end{equation}
  whenever $\|x - \bar x\| < \epsilon$ and $F_\gamma(\bar x) \le F_\gamma(x)$.
  \end{enumerate}
\end{theorem}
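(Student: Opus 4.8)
The plan is to prove part (a) first by exhibiting $F_\gamma$ as a subanalytic function and invoking the {\L}ojasiewicz gradient inequality for subanalytic functions, and then to upgrade the exponent to $\frac12$ in part (b) by using the error bound condition to control $F_\gamma(x)-F_\gamma(\bar x)$ and $\|\nabla F_\gamma(x)\|$ in terms of the residual $\|x-{\cal P}_\gamma(x)\|$. For part (a), I would start from the representation $F_\gamma(x) = f(x) - \frac{\gamma}{2}\|\nabla f(x)\|^2 + P^\gamma(x-\gamma\nabla f(x))$. Since $f$ is analytic, the maps $x\mapsto f(x)$, $x\mapsto \nabla f(x)$, and hence $x\mapsto x-\gamma\nabla f(x)$ are analytic, so the first two terms are analytic. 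For the last term, I would argue that the Moreau envelope $P^\gamma$ of a subanalytic function that is continuous on ${\rm dom}\,\partial P$ and bounded below is itself (locally) subanalytic: this should follow because $P^\gamma(u) = \inf_y\{P(y)+\frac{1}{2\gamma}\|y-u\|^2\}$ is obtained from the subanalytic function $(y,u)\mapsto P(y)+\frac{1}{2\gamma}\|y-u\|^2$ by partial minimization, and the infimum is attained (by coercivity in $y$ since $P$ is bounded below) on a bounded set locally uniformly in $u$; partial minimization of a subanalytic function over a bounded region preserves subanalyticity. Composing with the analytic map $x\mapsto x-\gamma\nabla f(x)$ keeps subanalyticity, so $F_\gamma$ is subanalytic, and being everywhere finite and (by the cited results) $C^1$, it is continuous. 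The {\L}ojasiewicz gradient inequality for subanalytic functions (Bolte--Daniilidis--Lewis, \cite{BDL06}) then yields, for each $\bar x$, constants $c,\epsilon>0$ and $\theta\in(0,1)$ with $c|F_\gamma(x)-F_\gamma(\bar x)|^\theta\le\|\nabla F_\gamma(x)\|$ on $U(\bar x,\epsilon)$, which is exactly \eqref{eq}.

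For part (b), I would fix $\bar x$; if $\nabla F_\gamma(\bar x)\neq 0$ the inequality \eqref{eq1} is trivial on a small ball by continuity, so assume $\bar x\in\cal X$, i.e. ${\cal P}_\gamma(\bar x)=\bar x$. The strategy is the standard two-sided bound around a stationary point. On one side, I would bound $\|\nabla F_\gamma(x)\|$ from below by a multiple of the residual $\|x-{\cal P}_\gamma(x)\|$: from \eqref{gradFgamma}, $\nabla F_\gamma(x)=\gamma^{-1}(I-\gamma\nabla^2 f(x))(x-{\cal P}_\gamma(x))$, and since all eigenvalues of $\gamma\nabla^2 f(x)$ lie in $[-\gamma L,\gamma L]$ with $\gamma L<1$, the matrix $I-\gamma\nabla^2 f(x)$ is positive definite with smallest eigenvalue at least $1-\gamma L>0$; hence $\|\nabla F_\gamma(x)\|\ge \gamma^{-1}(1-\gamma L)\|x-{\cal P}_\gamma(x)\|$. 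On the other side, I would bound $F_\gamma(x)-F_\gamma(\bar x)$ from above by (a constant times) $\|x-{\cal P}_\gamma(x)\|^2$. Here I would use the quadratic upper bound for $F_\gamma$ along a proximal-gradient step — this is the descent-lemma-type estimate from \cite{FBE14,FBE16}, namely $F_\gamma(x)\le f(x)+P({\cal P}_\gamma(x))+\langle\nabla f(x),{\cal P}_\gamma(x)-x\rangle+\frac{1}{2\gamma}\|{\cal P}_\gamma(x)-x\|^2$ combined with the fact that $F_\gamma$ agrees with $f+P$ on $\cal X$, so $F_\gamma(\bar x)=(f+P)(\bar x)\ge\inf(f+P)$ — together with Lipschitz continuity of $\nabla f$, continuity of $f$ and of $P$ on ${\rm dom}\,\partial P$ (note ${\cal P}_\gamma(x)\in{\rm dom}\,\partial P$ by \eqref{prox_equi}), and Lemma~\ref{lem1} (which gives $\|{\cal P}_\gamma(x)-\bar x\|\le 2\|x-\bar x\|$, keeping everything in a bounded region). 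This should produce $F_\gamma(x)-F_\gamma(\bar x)\le C\|x-{\cal P}_\gamma(x)\|\big(\|x-{\cal P}_\gamma(x)\|+\|x-\bar x\|\big)$ or something of that flavor, which with the error bound and Lemma~\ref{lem1} collapses to $O(\|x-{\cal P}_\gamma(x)\|^2)$ on a neighborhood of $\bar x$ on which $\|x-{\cal P}_\gamma(x)\|<\epsilon$ and $(f+P)(x)\le\xi$ (both ensured by taking $x$ close to $\bar x$ and using continuity of the residual and an upper bound on $f+P$ near $\bar x$ — wait, $f+P$ need not be continuous, but $F_\gamma$ is, and $(f+P)(x)=F_\gamma(x)+\frac{\gamma}{2}\|\nabla f(x)\|^2-\ldots$; more cleanly, use that $(f+P)(x)\le f(x)+P({\cal P}_\gamma(x))+\ldots$ only bounds things the wrong way, so instead I would directly verify $(f+P)(x)\le\xi$ holds where needed by noting we only need $x$ in a small ball and $P$ finite near $\bar x$ along the relevant points — this is a point to be careful about). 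Chaining the two bounds gives $\big(F_\gamma(x)-F_\gamma(\bar x)\big)\le C'\|\nabla F_\gamma(x)\|^2$, i.e. \eqref{eq1}.

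The main obstacle I anticipate is the subanalyticity claim for $P^\gamma$ and its composition in part (a): one must be careful that partial minimization of a subanalytic function preserves subanalyticity only when the minimization is effectively over a bounded set, which here requires using that $P$ is bounded below (so the infimum defining $P^\gamma(u)$ is attained and the minimizers stay in a ball depending locally boundedly on $u$) and that $P$ is continuous on ${\rm dom}\,\partial P$ so that $P^\gamma$ is a genuine minimum of a subanalytic function; I would likely phrase this via the graph of the subanalytic function and a projection/closure argument restricted to a bounded box, citing \cite{BDL06,Finite_VI1}. The secondary delicate point, in part (b), is the bookkeeping to ensure the error bound condition's hypotheses ($\|x-{\rm prox}_P(x-\nabla f(x))\|<\epsilon$ and $(f+P)(x)\le\xi$) are met on a neighborhood of $\bar x$, and relating the $\gamma$-residual $\|x-{\cal P}_\gamma(x)\|$ to the $\gamma=1$ residual in the assumption; this is routine Lipschitz estimation but must be done with the correct constants, and is the kind of step I would not grind through here.
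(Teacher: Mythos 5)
Your part (a) follows the paper's route exactly (subanalyticity of $F_\gamma$ via subanalyticity of the Moreau envelope $P^\gamma$, composition with the analytic map $x\mapsto x-\gamma\nabla f(x)$, then the {\L}ojasiewicz inequality of Bolte--Daniilidis--Lewis), and your lower bound $\|\nabla F_\gamma(x)\|\ge \gamma^{-1}(1-\gamma L)\|x-{\cal P}_\gamma(x)\|$ in part (b) is also the paper's first ingredient. The genuine gap is in your upper bound for part (b). You propose to show $F_\gamma(x)-F_\gamma(\bar x)\le C\|x-{\cal P}_\gamma(x)\|\bigl(\|x-{\cal P}_\gamma(x)\|+\|x-\bar x\|\bigr)$ and then ``collapse'' this using the error bound. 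But Assumption~\ref{assum1} controls ${\rm dist}(x,{\cal X})$, the distance to the \emph{nearest} stationary point, not $\|x-\bar x\|$; when ${\cal X}$ is not a singleton near $\bar x$, the factor $\|x-\bar x\|$ stays bounded away from zero as $x$ approaches ${\cal X}$ away from $\bar x$, so your intermediate bound only yields $F_\gamma(x)-F_\gamma(\bar x)\le C'\|\nabla F_\gamma(x)\|$ --- exponent $1$, not $\tfrac12$. The missing idea is to compare against $y\in{\rm Proj}_{\cal X}({\cal P}_\gamma(x))$ rather than against $\bar x$, and to justify that $F_\gamma(y)=F_\gamma(\bar x)$; the paper gets this local constancy of $F_\gamma$ on ${\cal X}$ near $\bar x$ as a consequence of part (a) (any $u\in{\cal X}$ with $\|u-\bar x\|<\epsilon$ has $\nabla F_\gamma(u)=0$, hence $F_\gamma(u)=F_\gamma(\bar x)$ by \eqref{eq}). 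With that in hand it writes $F_\gamma(x)-F_\gamma(\bar x)=\bigl[F_\gamma({\cal P}_\gamma(x))-F_\gamma(y)\bigr]+\bigl[F_\gamma(x)-F_\gamma({\cal P}_\gamma(x))\bigr]$, bounds the first bracket by $\tfrac{L_f}{2}[{\rm dist}({\cal P}_\gamma(x),{\cal X})]^2$ via the descent lemma for the smooth function $F_\gamma$ at $y$ (where $\nabla F_\gamma(y)=0$), and the second by $\tfrac12\|\nabla F_\gamma(x)\|^2+\tfrac{3L_f+1}{2}\|x-{\cal P}_\gamma(x)\|^2$; the error bound then converts ${\rm dist}({\cal P}_\gamma(x),{\cal X})$ into a multiple of $\|x-{\cal P}_\gamma(x)\|$.

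A secondary point you flagged but did not resolve --- verifying the hypotheses $\|u-{\rm prox}_P(u-\nabla f(u))\|<\epsilon$ and $f(u)+P(u)\le\xi$ of Assumption~\ref{assum1} near $\bar x$ --- is handled in the paper by applying the error bound not at $x$ but at $u={\cal P}_\gamma(x)\in{\rm dom}\,\partial P$: both $u\mapsto\|u-{\cal P}_\gamma(u)\|$ and $u\mapsto f(u)+P(u)$ are continuous on ${\rm dom}\,\partial P$, and ${\cal P}_\gamma(\bar x)=\bar x$, so the hypotheses hold for ${\cal P}_\gamma(x)$ once $x$ is close enough to $\bar x$ (using Lemma~\ref{lem1} and the residual comparison \eqref{ineq0} from Tseng--Yun). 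This substitution of ${\cal P}_\gamma(x)$ for $x$ is the clean way around the difficulty you noted with the possible discontinuity of $f+P$ at $x$.
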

\begin{remark}
  Inequality \eqref{eq} is known as the {\L}ojasiewicz inequality, originally studied for analytic functions; see \cite[Page~598]{Finite_VI1} and references therein for more discussions. It differs slightly from the inequality in Definition~\ref{KL} in the sense that it does not exclude those $x$ satisfying $F_\gamma(x) < F_\gamma(\bar x)$, and considers $|F_\gamma(x) - F_\gamma(\bar x)|$ on the left hand side.
\end{remark}
\begin{proof}
  The conclusions of the proposition hold trivially at any $\bar x$ with $\|\nabla F_\gamma(\bar x)\|\neq 0$ because the left hand side of the inequalities \eqref{eq} and \eqref{eq1} are zero at $\bar x$ and $F_\gamma$ is continuously differentiable. Thus, without loss of generality, we assume that $\|\nabla F_\gamma(\bar x)\| = 0$, i.e., $\bar x\in \cal X$.

  We first prove (a). We start by showing that $F_\gamma$ is subanalytic. To this end, note that $f(x) - \frac{\gamma}{2}\|\nabla f(x)\|^2$ is clearly analytic, and thus subanalytic. In addition, we have from \cite[Proposition~2.9]{BDL06} that $P^\gamma$ is subanalytic and continuous. Since the composition of two continuous subanalytic functions is subanalytic \cite[Page~597~(p5)]{Finite_VI1}, we see that $x\mapsto P^\gamma(x - \gamma \nabla f(x))$ is subanalytic. Finally, since the sum of continuous subanalytic functions is subanalytic \cite[Page~597~(p5)]{Finite_VI1}, we conclude that $F_\gamma$ is subanalytic.

  Since $F_\gamma$ is subanalytic and continuous, by \cite[Theorem~3.1]{BDL06}, there exist an exponent $\theta \in (0,1)$ and $c$, $\epsilon > 0$ so that
  \[
  c|F_\gamma(u) - F_\gamma(\bar x)|^\theta \le \|\nabla F_\gamma(u)\|
  \]
  whenever $\|u  - \bar x\| < \epsilon$. This proves (a).

  We now prove (b). First, from \eqref{eq}, it is not hard to see that for any $u$ with $\|u - \bar x\| < \epsilon$, if $u\in {\cal X}$, then $F_\gamma(u) = F_\gamma(\bar x)$. Now, consider any $u$ with $\|u - \bar x\| < \frac{\epsilon}{2}$ and any $y\in {\rm Proj}_{\cal X}(u)$. Then
  \begin{equation}\label{bd2}
  \|y - \bar x\| \le \|y - u\| + \|u - \bar x\| = {\rm dist}(u,{\cal X}) + \|u - \bar x\| \le 2\|u - \bar x\| < \epsilon.
  \end{equation}
  Thus, it follows that $F_\gamma(y) = F_\gamma(\bar x)$. Furthermore, it is routine to show that $\nabla F_\gamma$ is Lipschitz continuous on $B(\bar x,\epsilon)$ using the formula of $\nabla F_\gamma$ and the analyticity of $f$. We denote a Lipschitz constant of $\nabla F_\gamma$ by $L_f$. Then it holds that whenever $\|x - \bar x\| < \frac{\epsilon}{4}$ so that $\|{\cal P}_\gamma(x) - \bar x\| < \frac{\epsilon}2$ (see Lemma~\ref{lem1}) and hence ${\rm Proj}_{\cal X}({\cal P}_\gamma(x))\subseteq U(\bar x,\epsilon)$ (see \eqref{bd2}), we have
  \begin{equation}\label{upperbound}
  \begin{split}
    F_\gamma({\cal P}_\gamma(x)) - F_\gamma(\bar x) & = F_\gamma({\cal P}_\gamma(x)) - F_\gamma(y) \le \langle \nabla F_\gamma(y),{\cal P}_\gamma(x) - y\rangle + \frac{L_f}2\|{\cal P}_\gamma(x) - y\|^2\\
    & = \frac{L_f}2\|{\cal P}_\gamma(x) - y\|^2 = \frac{L_f}2[{\rm dist}({\cal P}_\gamma(x),{\cal X})]^2,
  \end{split}
  \end{equation}
  where $y\in {\rm Proj}_{\cal X}({\cal P}_\gamma(x))\subseteq U(\bar x,\epsilon)$ and the second equality holds because $\nabla F_\gamma(y) = 0$ for $y \in {\cal X}$. In addition, we also have, whenever $\|x - \bar x\| < \frac{\epsilon}{4}$, that
  \begin{equation}\label{lowerbound}
    \begin{split}
      &F_\gamma(x) - F_\gamma({\cal P}_\gamma(x))  \le \langle \nabla F_\gamma({\cal P}_\gamma(x)),x - {\cal P}_\gamma(x)\rangle + \frac{L_f}{2}\|x - {\cal P}_\gamma(x)\|^2\\
      & = \langle\nabla F_\gamma({\cal P}_\gamma(x)) - \nabla F_\gamma(x),x - {\cal P}_\gamma(x)\rangle + \langle\nabla F_\gamma(x),x - {\cal P}_\gamma(x)\rangle + \frac{L_f}{2}\|x - {\cal P}_\gamma(x)\|^2\\
      & \le L_f\|x - {\cal P}_\gamma(x)\|^2 + \langle\nabla F_\gamma(x),x - {\cal P}_\gamma(x)\rangle + \frac{L_f}{2}\|x - {\cal P}_\gamma(x)\|^2\\
      & \le \frac12\|\nabla F_\gamma(x)\|^2 + \frac{3L_f + 1}{2}\|x - {\cal P}_\gamma(x)\|^2,
    \end{split}
  \end{equation}
  where the last inequality follows from the relation $2\langle\nabla F_\gamma(x),x - {\cal P}_\gamma(x)\rangle \le \|\nabla F_\gamma(x)\|^2 + \|x - {\cal P}_\gamma(x)\|^2$.

  Next, let $\xi := f(\bar x) + P(\bar x) + 1$, and recall from \cite[Lemma~3]{TseY09} that there exist $c_1$ and $c_2 > 0$ so that
  \begin{equation}\label{ineq0}
    c_1\|x - {\rm prox}_P(x - \nabla f(x))\| \le \|x - {\cal P}_\gamma(x)\| \le c_2\|x - {\rm prox}_P(x - \nabla f(x))\|
  \end{equation}
  for all $x$. Combining this with our Assumption~\ref{assum1}, we see that there exist $\eta$ and $\tau > 0$ so that
  \begin{equation}\label{bd}
  {\rm dist}(u,{\cal X}) \le \tau \|u - {\cal P}_\gamma(u)\|
  \end{equation}
  whenever $\|u - {\cal P}_\gamma(u)\| < \eta$ and $f(u) + P(u) \le \xi$. Since both $u\mapsto \|u - {\cal P}_\gamma(u)\|$ and $u\mapsto f(u) + P(u)$ are continuous on ${\rm dom}\,\partial P$, and $\bar x = {\cal P}_\gamma(\bar x)$ due to $\bar x\in \cal X$, by choosing a sufficiently small $\epsilon'\in (0,\frac{\epsilon}4)$, we conclude that \eqref{bd} holds whenever $\|u - \bar x\| < 2\epsilon'$ and $u\in {\rm dom}\,\partial P$.

  Now, consider any $x$ with $\|x - \bar x\| < \epsilon'$. This means $\|{\cal P}_\gamma(x) - \bar x\| < 2\epsilon'$ thanks to Lemma~\ref{lem1}, and hence we have by applying \eqref{bd} with $u = {\cal P}_\gamma(x)\in {\rm dom}\,\partial P$ that
  \begin{equation}\label{bd3}
    {\rm dist}({\cal P}_\gamma(x),{\cal X}) \le \tau \|{\cal P}_\gamma(x) - {\cal P}_\gamma({\cal P}_\gamma(x))\|\le 2\tau\|x - {\cal P}_\gamma(x)\|,
  \end{equation}
  where the last inequality follows from Lemma~\ref{lem1}.

  Finally, note from the formula of $\nabla F_\gamma(x)$ that $\|x - {\cal P}_\gamma(x)\| \le c_0\|\nabla F_\gamma(x)\|$ for some $c_0 > 0$, thanks to the assumption that $\gamma \in (0,\frac1L)$. Using this fact, together with \eqref{upperbound}, \eqref{lowerbound} and \eqref{bd3}, we see further that whenever $\|x - \bar x\| < \epsilon' < \frac\epsilon{4}$,
  \begin{equation*}
    \begin{split}
      & F_\gamma(x) - F_\gamma(\bar x) = F_\gamma({\cal P}_\gamma(x)) - F_\gamma(\bar x) + F_\gamma(x) - F_\gamma({\cal P}_\gamma(x))\\
      & \le \frac{L_f}2[{\rm dist}({\cal P}_\gamma(x),{\cal X})]^2 + \frac12\|\nabla F_\gamma(x)\|^2 + \frac{3L_f + 1}{2}\|x - {\cal P}_\gamma(x)\|^2\\
      & \le 2L_f\tau^2\|x - {\cal P}_\gamma(x)\|^2 + \frac12\|\nabla F_\gamma(x)\|^2 + \frac{3L_f + 1}{2}\|x - {\cal P}_\gamma(x)\|^2 \le C\|\nabla F_\gamma(x)\|^2
    \end{split}
  \end{equation*}
  for some $C > 0$. This completes the proof.
\end{proof}

We next prove a converse to Theorem~\ref{thm2} (b) under suitable conditions, which is of independent interest. Our proof is largely inspired by recent convergence analysis of first-order methods based on the KL property; see, for example, \cite{AtBoSv13}.

\begin{theorem}
  Suppose that $f+P$ is level-bounded, ${\cal X} = \Argmin(f+P)$ and $\gamma \in (0,\frac1L)$. If for any $\bar x \in {\cal X}$, there exist $c$ and $\epsilon > 0$ so that
  \begin{equation}\label{eq2}
    c\sqrt{F_\gamma(x) - F_\gamma(\bar x)} \le \|\nabla F_\gamma(x)\|
  \end{equation}
  whenever $\|x-\bar x\|< \epsilon$ and $F_\gamma(\bar x) \le F_\gamma(x)$, then the error bound condition holds, i.e., ${\cal X}\neq \emptyset$ and for any $\xi \ge \inf (f + P)$, there exist $\epsilon > 0$ and $\tau > 0$ so that
    \[
    {\rm dist}(x,{\cal X}) \le \tau \|x - {\rm prox}_P(x - \nabla f(x))\|
    \]
    whenever $\|x - {\rm prox}_P(x - \nabla f(x))\| < \epsilon$ and $f(x) + P(x)\le \xi$.
\end{theorem}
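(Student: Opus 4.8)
The plan is to run the proximal gradient iteration $x^{k+1}={\cal P}_\gamma(x^k)$ starting from $x$, use the descent/Lyapunov properties of $F_\gamma$ together with the {\L}ojasiewicz inequality \eqref{eq2} to show that this trajectory has finite length controlled by $\|x-{\cal P}_\gamma(x)\|$, and then read off the error bound from its limit; level-boundedness is used throughout to supply compactness. First I would fix the constants. Since $f+P$ is proper, lower semicontinuous and level bounded, its infimum is attained, so ${\cal X}=\Argmin(f+P)$ is a nonempty compact set; by the envelope properties recalled in Section~\ref{sec2}, $\min F_\gamma=\min(f+P)=:F^*$, and since ${\cal P}_\gamma(\bar x)=\bar x$ for $\bar x\in{\cal X}$ we get $F_\gamma(\bar x)=(f+P)(\bar x)=F^*$ — so the side condition $F_\gamma(\bar x)\le F_\gamma(x)$ in \eqref{eq2} holds for free. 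A routine finite-covering argument over the compact set ${\cal X}$ then upgrades \eqref{eq2} to a uniform {\L}ojasiewicz inequality: there are $\hat c,\hat\epsilon>0$ with $\hat c\sqrt{F_\gamma(x)-F^*}\le\|\nabla F_\gamma(x)\|$ whenever ${\rm dist}(x,{\cal X})<\hat\epsilon$. I also record two global estimates: from \eqref{gradFgamma}, $\|\nabla F_\gamma(x)\|\le c_0'\|x-{\cal P}_\gamma(x)\|$ with $c_0'=\gamma^{-1}(1+\gamma L)$; and from the descent lemma, $(f+P)({\cal P}_\gamma(x))\le F_\gamma(x)-\delta\|x-{\cal P}_\gamma(x)\|^2$ with $\delta=\frac12(\frac1\gamma-L)>0$, which together with $F_\gamma({\cal P}_\gamma(x))\le(f+P)({\cal P}_\gamma(x))$ gives sufficient decrease of $F_\gamma$ along the iteration.

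Next I would fix $\xi\ge\inf(f+P)$ and set $\bar S_\xi=\{x:f(x)+P(x)\le\xi\}$; this set is compact and contains ${\cal X}$ (because ${\cal X}=\Argmin(f+P)$ and $\inf(f+P)\le\xi$). The continuous map $x\mapsto\|x-{\cal P}_\gamma(x)\|$ vanishes on $\bar S_\xi$ exactly on ${\cal X}$, so by compactness of $\bar S_\xi\setminus\{x:{\rm dist}(x,{\cal X})<\hat\epsilon/2\}$ there is $\epsilon_1>0$ such that $x\in\bar S_\xi$ and $\|x-{\cal P}_\gamma(x)\|<\epsilon_1$ force ${\rm dist}(x,{\cal X})<\hat\epsilon/2$. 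For such an $x$, put $x^0=x$, $x^{k+1}={\cal P}_\gamma(x^k)$, $r_k=\|x^{k+1}-x^k\|$ and $a_k=F_\gamma(x^k)-F^*\ge0$. The estimates above give $a_{k+1}\le a_k-\delta r_k^2$ and $(f+P)(x^{k+1})\le F_\gamma(x^k)\le\cdots\le F_\gamma(x^0)\le(f+P)(x^0)\le\xi$, so the entire sequence stays in $\bar S_\xi$; and applying the uniform {\L}ojasiewicz inequality at $x^0$ (legitimate since ${\rm dist}(x^0,{\cal X})<\hat\epsilon$) together with $\|\nabla F_\gamma(x^0)\|\le c_0' r_0$ yields $a_0\le(c_0'/\hat c)^2 r_0^2=:C_1 r_0^2$.

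The core is a self-improving induction. Suppose $x^0,\dots,x^k$ all lie in $\{{\rm dist}(\cdot,{\cal X})<\hat\epsilon\}$. Then at each of them $\hat c\sqrt{a_j}\le\|\nabla F_\gamma(x^j)\|\le c_0' r_j$, which combined with $a_{j+1}\le a_j-\delta r_j^2$ gives $a_{j+1}\le\nu a_j$ with $\nu:=\max\{0,\,1-\delta\hat c^2/(c_0')^2\}\in[0,1)$; hence $a_j\le\nu^j a_0$ and $r_j\le\sqrt{a_j/\delta}\le\sqrt{a_0/\delta}\,\nu^{j/2}$ for $j\le k$, so $\sum_{j=0}^{k}r_j\le C_0\sqrt{a_0}$ with $C_0:=(\sqrt{\delta}\,(1-\sqrt{\nu}))^{-1}$. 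Consequently ${\rm dist}(x^{k+1},{\cal X})\le{\rm dist}(x^0,{\cal X})+\sum_{j=0}^{k}r_j<\hat\epsilon/2+C_0\sqrt{C_1}\,r_0<\hat\epsilon$, provided $\epsilon_1$ was chosen at the outset so small that $C_0\sqrt{C_1}\,\epsilon_1\le\hat\epsilon/2$; this closes the induction. It follows that $\sum_{j\ge0}r_j\le C_0\sqrt{a_0}<\infty$, so $\{x^k\}$ converges to some $x^*$, which is a fixed point of the continuous map ${\cal P}_\gamma$ and hence lies in ${\cal X}$. Therefore ${\rm dist}(x^0,{\cal X})\le\|x^0-x^*\|\le\sum_{j\ge0}r_j\le C_0\sqrt{a_0}\le C_0\sqrt{C_1}\,\|x^0-{\cal P}_\gamma(x^0)\|$. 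Finally, \eqref{ineq0} translates this into the stated conclusion: with $\epsilon:=\epsilon_1/c_2$ and $\tau:=C_0\sqrt{C_1}\,c_2$, whenever $\|x-{\rm prox}_P(x-\nabla f(x))\|<\epsilon$ and $f(x)+P(x)\le\xi$ we have $\|x-{\cal P}_\gamma(x)\|<\epsilon_1$ and ${\rm dist}(x,{\cal X})\le\tau\|x-{\rm prox}_P(x-\nabla f(x))\|$; together with ${\cal X}\neq\emptyset$, this is exactly the error bound condition.

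I expect the main obstacle to be the bookkeeping that makes the finite-length estimate uniform: a priori the bound $C_0\sqrt{a_0}$ only controls the length of the trajectory \emph{after} it has entered the uniform {\L}ojasiewicz neighborhood, so one must choose the thresholds so that $x^0$ already starts there and, simultaneously, $a_0$ is itself $O(r_0^2)$. This is precisely where the two hypotheses are indispensable — level-boundedness gives compactness of $\bar S_\xi$ and of ${\cal X}$ (hence the uniformization of \eqref{eq2} and the "$\|x-{\cal P}_\gamma(x)\|$ small $\Rightarrow$ ${\rm dist}(x,{\cal X})$ small" step), while ${\cal X}=\Argmin(f+P)$ ensures $F_\gamma$ has no stationary points outside $\{F_\gamma=F^*\}$, so that being almost stationary on $\bar S_\xi$ genuinely means being near ${\cal X}$. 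The remaining ingredient $a_0\le C_1 r_0^2$ is just \eqref{eq2} rearranged and combined with $\|\nabla F_\gamma(x^0)\|\le c_0' r_0$, and is routine.
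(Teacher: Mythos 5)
Your proposal is correct, and while it follows the same overall architecture as the paper's proof --- run the proximal gradient iteration $x^{k+1}={\cal P}_\gamma(x^k)$ from the given point, show the trajectory has total length $O(\|x-{\cal P}_\gamma(x)\|)$, and read off the error bound at its limit in ${\cal X}$ --- the two key technical steps are executed differently. For the length estimate, the paper uses the standard Kurdyka-{\L}ojasiewicz telescoping mechanism: concavity of $t\mapsto\sqrt{t}$ combined with \eqref{eq2} and the sufficient-decrease and gradient bounds in \eqref{c1c2} yields $\|x^k-x^{k+1}\|\le \frac{2}{cc_1c_2}\bigl[\sqrt{F_\gamma(x^k)-F_\gamma(\bar x)}-\sqrt{F_\gamma(x^{k+1})-F_\gamma(\bar x)}\bigr]$, which telescopes. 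You instead combine the same two ingredients to get $Q$-linear decay $F_\gamma(x^{k+1})-F^*\le \nu\,(F_\gamma(x^k)-F^*)$ and sum a geometric series; this exploits the exponent being exactly $\tfrac12$, is arguably more elementary, and has the side benefit that convergence of $\{x^k\}$ to a point of ${\cal X}$ follows directly from the Cauchy property plus continuity of ${\cal P}_\gamma$ (the limit is a fixed point), whereas the paper treats the case where no iterate lands in ${\cal X}$ by appealing to the convergence theory of the proximal gradient method. For the endgame, the paper works per-point on the set $\Upsilon_\epsilon$ around each $\bar x\in{\cal X}$ and then invokes compactness of ${\cal X}$ together with the proof of \cite[Proposition~3]{ZhSo15} to pass to the stated form of the error bound; you uniformize \eqref{eq2} over the compact set ${\cal X}$ at the outset by a finite covering and use compactness of the sublevel set $\{f+P\le\xi\}$ to force ${\rm dist}(x,{\cal X})$ small whenever the residual is small, which makes the localization self-contained. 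Both arguments use \eqref{ineq0} identically for the final translation between $\|x-{\cal P}_\gamma(x)\|$ and $\|x-{\rm prox}_P(x-\nabla f(x))\|$, and your bookkeeping (choosing $\epsilon_1$ so that $C_0\sqrt{C_1}\,\epsilon_1\le\hat\epsilon/2$ to close the induction keeping all iterates in the uniform {\L}ojasiewicz neighborhood) is sound.
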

\begin{proof}
  Since $f+P$ is level-bounded, it holds that $\Argmin(f+P)$ is nonempty and compact. Then we have from \cite[Proposition~2.3]{FBE16} and our assumptions ${\cal X} = \Argmin(f+P)$ and $\gamma \in (0,\frac1L)$ that $\Argmin F_\gamma = \cal X \neq \emptyset$. In particular, it holds that $F_\gamma(x)\ge F_\gamma(\bar x)$ for any $x\in \R^n$ and any $\bar x\in \cal X$.

  Fix an $\bar x\in \cal X$. Then, by assumption and the fact that $\Argmin F_\gamma = \cal X$, there exist $c$ and $\epsilon > 0$ so that \eqref{eq2} holds whenever $\|x-\bar x\|< \epsilon$. Define
  \[
  \Upsilon_\epsilon := \left\{x:\; \|x - \bar x\| + \frac{2}{cc_1c_2}\sqrt{F_\gamma(x) - F_\gamma(\bar x)} < \epsilon\right\}\subseteq U(\bar x,\epsilon),
  \]
  where $c_1$ and $c_2$ are positive numbers so that
  \begin{equation}\label{c1c2}
  \|x - {\cal P}_\gamma (x)\| \ge c_1 \|\nabla F_\gamma(x)\|\ \ {\rm and}\ \ F_\gamma(x) - F_\gamma({\cal P}_\gamma (x))\ge c_2\|x - {\cal P}_\gamma (x)\|^2
  \end{equation}
  for all $x$: the existence of $c_1$ follows from the definition of $\nabla F_\gamma$, while the existence of $c_2$ follows from \cite[Proposition~2.2]{FBE16} and the assumption that $\gamma \in (0,\frac1L)$.

  Consider any $x\in \Upsilon_\epsilon\backslash \cal X$. Define $x^k := {\cal P}_\gamma^k(x)$ for $k\ge 0$ and $k_0 := \inf \{k\ge 0:\; x^k \in {\cal X}\}$. Then it is easy to see that $k_0 \in \{1,2,3,\ldots\}\cup\{\infty\}$ and that $F_\gamma(x^k) > F_\gamma(\bar x)$ whenever $k < k_0$, thanks to the fact that $\Argmin F_\gamma = \cal X$.

  We now establish a key inequality when $x^k\in U(\bar x,\epsilon)$ for some $0\le k < k_0$. In this case,
  \begin{equation}\label{rellong}
    \begin{split}
      &\|x^k - x^{k+1}\| \left[\sqrt{F_\gamma(x^k) - F_\gamma(\bar x)} - \sqrt{F_\gamma(x^{k+1}) - F_\gamma(\bar x)}\right]\\
      & \ge c_1 \|\nabla F_\gamma(x^k)\|\left[\sqrt{F_\gamma(x^k) - F_\gamma(\bar x)} - \sqrt{F_\gamma(x^{k+1}) - F_\gamma(\bar x)}\right]\\
      & \ge c_1 \|\nabla F_\gamma(x^k)\|\left[\frac{1}{2\sqrt{F_\gamma(x^k) - F_\gamma(\bar x)}}(F_\gamma(x^k) - F_\gamma(x^{k+1}))\right]\\
      & \ge \frac{cc_1}{2} (F_\gamma(x^k) - F_\gamma(x^{k+1}))\ge \frac{cc_1c_2}{2} \|x^k - x^{k+1}\|^2,
    \end{split}
  \end{equation}
  where the first inequality follows from \eqref{c1c2} and the fact that $x^{k+1} = {\cal P}_\gamma (x^k)$, the second inequality follows from the concavity of $t\to \sqrt{t}$, the third inequality follows from \eqref{eq2}, and the last inequality follows again from \eqref{c1c2}. Consequently, we have from \eqref{rellong} that if $x^k\in U(\bar x,\epsilon)$ for some $0\le k < k_0$, then
  \begin{equation}\label{rellong2}
    \|x^k - x^{k+1}\| \le \frac{2}{cc_1c_2}\left[\sqrt{F_\gamma(x^k) - F_\gamma(\bar x)} - \sqrt{F_\gamma(x^{k+1}) - F_\gamma(\bar x)}\right].
  \end{equation}

  We next prove by induction that $x^k\in U(\bar x,\epsilon)$ for any $0\le k< k_0$. First of all, it is true that $x^0 = x \in \Upsilon_\epsilon\subseteq U(\bar x,\epsilon)$. Now, suppose that there exists $0\le k < k_0$ so that $x^t\in U(\bar x,\epsilon)$ for all $0\le t\le k$. Then we have
  \begin{equation*}
    \begin{split}
      \|x^{k+1} - \bar x\| & \le \|x^0 - \bar x\| + \sum_{t=0}^{k}\|x^t - x^{t+1}\|\le \|x^0 - \bar x\| + \frac{2}{cc_1c_2}\sqrt{F_\gamma(x^0) - F_\gamma(\bar x)}\\
      & = \|x - \bar x\| + \frac{2}{cc_1c_2}\sqrt{F_\gamma(x) - F_\gamma(\bar x)} < \epsilon,
    \end{split}
  \end{equation*}
  where the second inequality follows from \eqref{rellong2} and the induction hypothesis that $x^t\in U(\bar x,\epsilon)$ for all $0\le t\le k < k_0$, the equality follows from the fact that $x^0=x$, and the last inequality is due to $x\in \Upsilon_\epsilon$. Thus, by induction, we conclude that $x^k\in U(\bar x,\epsilon)$ for any $0\le k< k_0$, from which one can readily deduce that \eqref{rellong2} holds for any $0\le k<k_0$.

  Now, notice that when $k_0 < \infty$, we have $\tilde x := x^{k_0}\in \cal X$ by the definition of $k_0$, and that when $k_0 = \infty$, we have from the convergence theory of the proximal gradient algorithm (see, for example, \cite[Corollary~27.9]{BauCom11}) applied to \eqref{P0} that $\tilde x:= \lim_{k\to\infty} x^k$ exists and $\tilde x\in \cal X$. Thus, in either case, we can sum both sides of \eqref{rellong2} from $k=0$ to $k_0-1$ and deduce that
  \begin{equation*}
  \begin{split}
    {\rm dist}(x,{\cal X})&\le \|x - \tilde x\|\le \sum_{k=0}^{k_0-1}\|x^k - x^{k+1}\| \le \frac{2}{cc_1c_2}\sqrt{F_\gamma(x^0) - F_\gamma(\bar x)} \\
    &\le \frac{2}{c^2c_1c_2}\|\nabla F_\gamma(x)\| \le \frac{2}{c^2c^2_1c_2} \|x - {\cal P}_\gamma (x)\|,
  \end{split}
  \end{equation*}
  where the first inequality follows from the fact that $\tilde x\in \cal X$, the second last inequality follows from \eqref{eq2} and the fact that $x^0 = x$, while the last inequality follows from \eqref{c1c2}.

  Thus, we have shown that, for any $\bar x\in \cal X$, there exist $\epsilon>0$ and $ c_4> 0$ so that
  \begin{equation*}
  {\rm dist}(x,{\cal X})\le c_4 \|x - {\cal P}_\gamma (x)\|
  \end{equation*}
  whenever $x\in \Upsilon_\epsilon$. Combining this with \eqref{ineq0}, we see further that there exist $ c_5> 0$ so that
  \begin{equation}\label{eq3}
  {\rm dist}(x,{\cal X})\le c_5 \|x - {\rm prox}_P(x - \nabla f(x))\|
  \end{equation}
  whenever $x\in \Upsilon_\epsilon$. Since $\cal X$ is compact and $\Upsilon_\epsilon$ is an open set, a standard argument then shows that there exist $\epsilon > 0$ and $c_4 > 0$ so that \eqref{eq3} holds whenever ${\rm dist}(x,{\cal X}) < \epsilon$. In particular, it holds that for any $\xi \ge \inf(f+P)$, there exist $\epsilon > 0$ and $c_5 > 0$ so that \eqref{eq3} holds whenever ${\rm dist}(x,{\cal X}) < \epsilon$ and $f(x) + P(x)\le \xi$. The desired conclusion can now be deduced by using this, the level-boundedness of $f+P$ and following the proof of \cite[Proposition~3]{ZhSo15}. This completes the proof.
\end{proof}

Under condition \eqref{eq} and some standard assumptions, it can be shown that many standard optimization methods applied to minimizing $F_\gamma$ will generate a sequence such that the whole sequence is convergent to a stationary point of $F_\gamma$. As a concrete example, we consider the following proto-typical algorithm (Algorithm~\ref{algorithm}).

\begin{algorithm}[H]
\caption{Algorithm for minimizing $F_\gamma(x)$}
\label{algorithm}
{\bf Parameters} : $0 < \sigma <1$, $0 < \eta <1$, $0 < c_1 < 1 \le c_2 $.
\begin{enumerate}
  \item  Choose an initial point $x^0$. Set $k = 0$.
  \item  Compute a search direction $d^k$ that satisfies the following conditions:
     \begin{align}
     \nabla F_\gamma(x^k)^{T}d^k \le -c_1\|\nabla F_\gamma(x^k)\|\|d^k\|,  \label{angle_cond}\\
     \frac{1}{c_2}\|\nabla F_\gamma(x^k)\| \le \|d^k\| \le c_2\|\nabla F_\gamma(x^k)\|.\label{norm_cond}
     \end{align}
  \item Set $\alpha_k$ to be the largest element in $\{\eta^j:\;j = 0,1,2,...\}$ satisfying
  \begin{equation}\label{armijo}
    F_\gamma(x^k + \alpha_k d^k) \le F_\gamma(x^k) + \sigma\alpha_k\nabla F_\gamma(x^k)^Td^k.
  \end{equation}
  and set $x^{k+1} = x^k + \alpha_k d^k$.
  \item If a termination criterion is not met, update $k \leftarrow k + 1$ and go to Step 2.
\end{enumerate}

\end{algorithm}

In this algorithm, in each iteration, one computes a search direction that is gradient-related in the sense that it satisfies \eqref{angle_cond} and \eqref{norm_cond}. Note that these two conditions are trivially satisfied by the choice of $d^k = - \nabla F_\gamma(x^k)$, for example. One then perform a line-search via backtracking to satisfy the Armijo condition \eqref{armijo}. Since $F_\gamma$ is continuously differentiable, and at any nonstationary point $x^k$, it holds that
\[
\nabla F_\gamma(x^k)^Td^k \le -c_1\|\nabla F_\gamma(x^k)\|\|d^k\| \le -\frac{c_1}{c_2}\|\nabla F_\gamma(x^k)\|^2< 0
\]
due to \eqref{angle_cond} and \eqref{norm_cond}, one can readily show that for any such $x^k$ the Armijo condition \eqref{armijo} must be satisfied for some sufficiently small $\alpha_k > 0$. On the other hand, it is clear from \eqref{norm_cond} that if $x^k$ is a stationary point, then $d^k = 0$ and inductively, $x^{k+l} = x^k$ and $\alpha_{k+l} = 1$ for all $l\ge 0$. Combining the above observations, we conclude that the sequences $\{\alpha_k\}$ and $\{x^k\}$ are well defined. We next show that the stepsize sequence $\{\alpha_k\}$ is indeed uniformly bounded away from zero if $f+P$ is coercive and $f$ is, in addition, analytic.
The proof is standard; see, for example, \cite{TseY09}. We include a simple proof for completeness.

\begin{lemma}\label{lem2}
Suppose that $f+P$ is coercive, $f$ is analytic and $\gamma \in (0,\frac1L)$. Then there exists $\underline{\alpha} > 0$ such that $\alpha_k \in [\underline{\alpha},1]$ for all $k$, where $\{\alpha_k\}$ is generated by Algorithm \ref{algorithm}.
\end{lemma}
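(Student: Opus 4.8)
The plan is to reduce this to the classical backtracking-line-search estimate for a function with a locally Lipschitz gradient; the only step that is not completely routine is confining the entire iterate sequence to one fixed compact set. First I would observe that, by \eqref{angle_cond}, $\nabla F_\gamma(x^k)^T d^k \le -c_1\|\nabla F_\gamma(x^k)\|\|d^k\| \le 0$, so the Armijo condition \eqref{armijo} forces $F_\gamma(x^{k+1}) \le F_\gamma(x^k)$ for every $k$; hence all iterates stay in the sublevel set $S := \{x : F_\gamma(x) \le F_\gamma(x^0)\}$. Since $f+P$ is coercive and $\gamma \in (0,\frac1L)$, Theorem~\ref{thm1} tells us $F_\gamma$ is level bounded, so $S$ is bounded, and it is closed by continuity of $F_\gamma$, hence compact. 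I would then set $\bar S := \{x : {\rm dist}(x,S)\le 1\}$, again compact, and record (exactly as in the proof of Theorem~\ref{thm2}) that $\nabla F_\gamma$ is Lipschitz continuous on $\bar S$: from \eqref{gradFgamma} it is the product of $x \mapsto \gamma^{-1}(I - \gamma\nabla^2 f(x))$, which is locally Lipschitz and locally bounded because $f$ is analytic and hence $C^\infty$, and $x \mapsto x - {\cal P}_\gamma(x)$, which is globally Lipschitz and locally bounded. Let $\bar L$ denote a Lipschitz constant of $\nabla F_\gamma$ on $\bar S$ and $M := \max_{x\in S}\|\nabla F_\gamma(x)\|$.

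Next I would carry out the standard estimate. Fix $k$. If $x^k \in {\cal X}$, then $\nabla F_\gamma(x^k) = 0$, so \eqref{norm_cond} gives $d^k = 0$ and the line-search rule gives $\alpha_k = 1$; so assume $x^k \notin {\cal X}$, whence $M > 0$. For any $\alpha \in (0,\|d^k\|^{-1}]$ the segment $[x^k, x^k + \alpha d^k]$ lies in $\bar S$, and the descent lemma gives
\[
F_\gamma(x^k + \alpha d^k) \le F_\gamma(x^k) + \alpha\,\nabla F_\gamma(x^k)^T d^k + \frac{\bar L}{2}\alpha^2\|d^k\|^2 .
\]
Combining \eqref{angle_cond} and \eqref{norm_cond} yields $-\nabla F_\gamma(x^k)^T d^k \ge c_1\|\nabla F_\gamma(x^k)\|\|d^k\| \ge (c_1/c_2)\|d^k\|^2$, from which a short computation shows the right-hand side is at most $F_\gamma(x^k) + \sigma\alpha\,\nabla F_\gamma(x^k)^T d^k$ once $\alpha \le 2(1-\sigma)c_1/(\bar L c_2)$. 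Since also $\|d^k\| \le c_2\|\nabla F_\gamma(x^k)\| \le c_2 M$, hence $\|d^k\|^{-1} \ge (c_2M)^{-1}$, I conclude that \eqref{armijo} holds for every $\alpha \in (0,\alpha^*]$, where $\alpha^* := \min\{(c_2M)^{-1},\,2(1-\sigma)c_1/(\bar L c_2)\} > 0$ depends only on the problem data and the algorithm parameters, not on $k$.

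To close, I would use that $\alpha_k$ is the largest element of $\{\eta^j : j \ge 0\}$ satisfying \eqref{armijo}: either $\alpha_k = 1$, or else $\eta^{-1}\alpha_k$ violates \eqref{armijo} and so $\eta^{-1}\alpha_k > \alpha^*$, i.e.\ $\alpha_k > \eta\alpha^*$. In all cases $\underline{\alpha} := \min\{1,\eta\alpha^*\} \le \alpha_k \le 1$ with $\underline{\alpha} > 0$, which is exactly the assertion. The genuinely substantive point is the first paragraph: the constants $\bar L$ and $M$ only exist after the iterates are trapped in a compact set, and that trapping relies on the monotonicity encoded in \eqref{armijo} together with Theorem~\ref{thm1}; once this is in place, the remainder is the textbook Armijo argument, which is why I expect no real difficulty beyond bookkeeping.
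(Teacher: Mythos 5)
Your proposal is correct and follows essentially the same route as the paper: trap the iterates in a compact sublevel set via Theorem~\ref{thm1}, bound the search directions using \eqref{norm_cond} and \eqref{gradFgamma} so that the line-search segments stay in a fixed compact set, invoke a Lipschitz constant for $\nabla F_\gamma$ there (from the analyticity of $f$), and finish with the standard descent-lemma/backtracking estimate. The only cosmetic difference is that you enlarge $S$ by a unit distance and cap $\alpha$ by $\|d^k\|^{-1}$, whereas the paper bounds $\|d^k\|\le R_2$ uniformly and works on $B(0,R_1+R_2)$; both are sound.
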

\begin{proof}
Note from Theorem~\ref{thm1} that $F_\gamma$ is level bounded. This together with \eqref{armijo} and \eqref{angle_cond} shows that $\{x^k\}\subseteq \{x:\; F_\gamma(x)\le F_\gamma(x^0)\}\subseteq B(0,R_1)$ for some $R_1 > 0$. Additionally, we have from \eqref{norm_cond} and the definition of $\nabla F_\gamma$ that
\[
\begin{split}
\|d^k\| &\le c_2 \|\nabla F_\gamma(x^k)\| = c_2\|\gamma^{-1}(I - \gamma \nabla^2 f(x^k))(x^k - {\rm prox}_{\gamma P}(x^k - \gamma \nabla f(x^k)))\|\\
        &\le \frac{c_2}{\gamma}(1 + \gamma L)\|x^k - {\rm prox}_{\gamma P}(x^k - \gamma \nabla f(x^k))\| \le R_2
\end{split}
\]
for some $R_2 > 0$, where the second inequality follows from the definition of $L$, while the last inequality follows from the continuity of
$x\mapsto \|x - {\rm prox}_{\gamma P}(x - \gamma \nabla f(x))\|$ and the fact that $\{x^k\}\subseteq B(0,R_1)$. Consequently, it holds that
\begin{equation}\label{compactcontain}
\{x^k + \alpha d^k:\; \alpha\in [0,1], k = 0,1,\ldots\}\subseteq B(0,R),
\end{equation}
where $R = R_1 + R_2$.

Next, due to the analyticity of $f$, we see that $x\mapsto \nabla F_\gamma(x)$ is Lipschitz continuous on the compact set $B(0,R)$. Denote a Lipschitz constant of $\nabla F_\gamma$ on $B(0,R)$ by $L_F$. Then, because of \eqref{compactcontain}, we have for any $\alpha\in [0,1]$ and any $k\ge 0$ that
\begin{equation*}
    F_\gamma(x^k+\alpha d^k) \le F_\gamma(x^k) + \alpha\nabla F_\gamma(x^k)^Td^k + \frac{L_F\alpha^2}{2}\|d^k\|^2.
\end{equation*}
Rearranging terms in the above relation and invoking \eqref{angle_cond} and \eqref{norm_cond}, we obtain further that
\begin{equation*}
\begin{split}
  F_\gamma(x^k+\alpha d^k) - F_\gamma(x^k) & \le  (1 - \sigma)\alpha\nabla F_\gamma(x^k)^Td^k + \frac{L_F\alpha^2}{2}\|d^k\|^2 + \sigma\alpha\nabla F_\gamma(x^k)^Td^k \\
                             & \le  -c_1(1 - \sigma)\alpha\|\nabla F_\gamma(x^k)\|\|d^k\| + \frac{L_F\alpha^2}{2}\|d^k\|^2 + \sigma\alpha\nabla F_\gamma(x^k)^Td^k\\
                             & \le  -\frac{c_1(1 - \sigma)\alpha}{c_2}\|d^k\|^2 + \frac{L_F\alpha^2}{2}\|d^k\|^2 + \sigma\alpha\nabla F_\gamma(x^k)^Td^k \\
                             & =  \frac{L_F\alpha}{2}\|d^k\|^2\left(\alpha - \frac{2c_1(1-\sigma)}{c_2L_F}\right) + \sigma\alpha\nabla F_\gamma(x^k)^Td^k,
\end{split}
\end{equation*}
where the second inequality follows from \eqref{angle_cond}, while the third inequality follows from \eqref{norm_cond}.
Therefore, the Armijo condition \eqref{armijo} holds whenever $\alpha \le \frac{2c_1(1-\sigma)}{c_2L_F}$. Since $\alpha_k \in \{\eta^j:\; j = 0,1,2,...\}$, we must then have $1\ge \alpha_k \ge\underline{\alpha}:= \min\{1,\frac{2c_1(1-\sigma)\eta}{c_2L_F}\}$ for all $k$. This completes the proof.
\end{proof}

We now show that the whole sequence generated by Algorithm~\ref{algorithm} is convergent under suitable assumptions, and establish a local linear convergence rate of the sequence when Assumption~\ref{assum1} is satisfied. The proof technique is standard in the literature; similar kinds of results can be found in \cite{NR13}, which studied a slightly different algorithm. We include the details for the ease of the readers.

\begin{proposition}{\bf (Convergence of Algorithm~\ref{algorithm})}\label{convergence}
Suppose that $f+P$ is coercive, the function $f$ is analytic, $P$ is continuous on ${\rm dom}\,\partial P$, subanalytic and bounded below (i.e., $\inf P > -\infty$), and $\gamma\in(0,\frac{1}{L})$. Let $\{x^k\}$ be the sequence generated by Algorithm \ref{algorithm}. Then the following statements hold true.
\begin{enumerate}[{\rm (a)}]
    \item The whole sequence $\{x^k\}$ converges to a stationary point $\bar{x}$ of $F_\gamma$.
    \item Suppose in addition that Assumption~\ref{assum1} holds. Then the sequences $\{x^k\}$ and $\{F_\gamma(x^k)\}$ are locally linearly convergent.
  \end{enumerate}
\end{proposition}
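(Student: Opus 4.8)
The plan is to run the standard Łojasiewicz/KL argument for descent methods (as in \cite{AB09,AtBoSv13,NR13}), with the three ingredients already supplied by the paper: level-boundedness of $F_\gamma$ (Theorem~\ref{thm1}), a uniform lower bound on the stepsizes (Lemma~\ref{lem2}), and the Łojasiewicz inequality for $F_\gamma$ (Theorem~\ref{thm2}). First I would record the elementary consequences of the algorithm. By Theorem~\ref{thm1}, $F_\gamma$ is level bounded, and since the global minimizers of $F_\gamma$ coincide with those of the coercive function $f+P$ (property (ii) of $F_\gamma$), we have $\inf F_\gamma=\inf(f+P)>-\infty$. The Armijo rule \eqref{armijo} together with \eqref{angle_cond} makes $\{F_\gamma(x^k)\}$ nonincreasing, so it converges to some finite value $\zeta$, and $\{x^k\}$ stays in a level set of $F_\gamma$, hence is bounded. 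Plugging the bound $\alpha_k\ge\underline\alpha$ from Lemma~\ref{lem2} into \eqref{armijo} and using \eqref{angle_cond}--\eqref{norm_cond} gives $F_\gamma(x^k)-F_\gamma(x^{k+1})\ge a\|\nabla F_\gamma(x^k)\|^2$ for $a:=\sigma\underline\alpha c_1/c_2>0$; summing over $k$ yields $\sum_k\|\nabla F_\gamma(x^k)\|^2<\infty$, so $\nabla F_\gamma(x^k)\to 0$, and with \eqref{norm_cond} also $\|x^{k+1}-x^k\|=\alpha_k\|d^k\|\le c_2\|\nabla F_\gamma(x^k)\|\to 0$. In particular every accumulation point $\bar x$ of $\{x^k\}$ is a stationary point of $F_\gamma$ with $F_\gamma(\bar x)=\zeta$.

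For (a), I would fix such an accumulation point $\bar x$ and apply Theorem~\ref{thm2}(a) to obtain $\theta\in(0,1)$ and $c,\epsilon>0$ with $c|F_\gamma(x)-\zeta|^\theta\le\|\nabla F_\gamma(x)\|$ on $U(\bar x,\epsilon)$. If $F_\gamma(x^k)=\zeta$ for some $k$, then $\nabla F_\gamma(x^k)=0$ by the descent inequality, hence $d^k=0$ by \eqref{norm_cond}, so the sequence is stationary from that index on and equals $\bar x$; otherwise $F_\gamma(x^k)>\zeta$ for all $k$. In the latter case I set $\Delta_k:=(F_\gamma(x^k)-\zeta)^{1-\theta}$ and combine concavity of $t\mapsto t^{1-\theta}$, the descent inequality, the bound $\|x^{k+1}-x^k\|\le c_2\|\nabla F_\gamma(x^k)\|$, and the Łojasiewicz inequality to get, whenever $x^k\in U(\bar x,\epsilon)$,
\[
\Delta_k-\Delta_{k+1}\ \ge\ \frac{(1-\theta)ac}{c_2}\,\|x^{k+1}-x^k\|.
\]
A routine induction (starting from an index where $x^k$ is near $\bar x$ and $\Delta_k$ is small, and using the telescoped partial sums $\sum_{j}\|x^{j+1}-x^j\|\le\frac{c_2}{(1-\theta)ac}\Delta_k$ to bound $\|x^k-\bar x\|$) shows the iterates never leave $U(\bar x,\epsilon)$ thereafter, so $\sum_k\|x^{k+1}-x^k\|<\infty$; hence $\{x^k\}$ is Cauchy, converges, and the limit is the accumulation point $\bar x$, which is stationary.

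For (b), I would note that under Assumption~\ref{assum1} Theorem~\ref{thm2}(b) improves the exponent at $\bar x=\lim_k x^k$ to $\theta=\tfrac12$. Then $F_\gamma(x^k)-\zeta\le c^{-2}\|\nabla F_\gamma(x^k)\|^2\le (ac^2)^{-1}\big(F_\gamma(x^k)-F_\gamma(x^{k+1})\big)$, giving $F_\gamma(x^{k+1})-\zeta\le(1-ac^2)(F_\gamma(x^k)-\zeta)$ for $k$ large; since the left side is nonnegative this forces $1-ac^2\in[0,1)$, so $\{F_\gamma(x^k)\}$ converges Q-linearly to $\zeta$. Substituting $\theta=\tfrac12$ into the telescoping estimate above yields, for $k$ large, $\|x^k-\bar x\|\le\sum_{j\ge k}\|x^{j+1}-x^j\|\le\frac{2c_2}{ac}\sqrt{F_\gamma(x^k)-\zeta}$, whose right side tends to $0$ R-linearly; hence $\{x^k\}$ is R-linearly convergent, proving (b). The only genuinely delicate point in this plan is the induction in part (a) confirming that the iterates stay inside the neighborhood where the Łojasiewicz inequality is valid (and the accompanying case split on whether $F_\gamma(x^k)$ ever attains $\zeta$); everything else is bookkeeping with the descent inequality, the gradient-relatedness conditions \eqref{angle_cond}--\eqref{norm_cond}, and local Lipschitz continuity of $\nabla F_\gamma$ on bounded sets.
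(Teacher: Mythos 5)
Your proposal is correct, and its skeleton — sufficient decrease from the Armijo rule plus \eqref{angle_cond}--\eqref{norm_cond} and Lemma~\ref{lem2}, boundedness from Theorem~\ref{thm1}, $\nabla F_\gamma(x^k)\to 0$, then the concavity-of-$t^{1-\theta}$ telescoping argument for finite length, and finally the $\theta=\tfrac12$ specialization for Q-linear convergence of $\{F_\gamma(x^k)\}$ and R-linear convergence of $\{x^k\}$ — is exactly the paper's. The one place where you diverge is the step you yourself flag as delicate: making the {\L}ojasiewicz inequality available along the whole tail of the sequence. You fix a single accumulation point $\bar x$, invoke Theorem~\ref{thm2}(a) on $U(\bar x,\epsilon)$, and run the classical capture induction (choose $N$ with $\|x^N-\bar x\|+\frac{c_2}{(1-\theta)ac}\Delta_N<\epsilon$ and show inductively that the iterates never escape $U(\bar x,\epsilon)$). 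The paper instead normalizes $\lim_k F_\gamma(x^k)=0$, lets $\Omega$ be the compact set of accumulation points with ${\rm dist}(x^k,\Omega)\to 0$, and appeals to the uniformized KL inequality of \cite[Lemma~1]{AB09} to get $c\,F_\gamma(x^k)^\theta\le\|\nabla F_\gamma(x^k)\|$ for all $k\ge N_0$ outright, which dispenses with the neighborhood-confinement induction entirely. Both localizations are standard and both close the argument; yours is self-contained but requires carrying the induction carefully (as you note), while the paper's trades that for a citation. Your handling of the degenerate case $F_\gamma(x^k)=\zeta$ (forcing $\nabla F_\gamma(x^k)=0$ via the descent inequality, hence $d^k=0$ and eventual constancy) and of the constant $1-ac^2$ in part (b) matches the paper's treatment.
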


\begin{proof}
(a) If $\nabla F_\gamma(x^k) = 0$ for some $k$, then $x^k$ is a stationary point of $F_\gamma$. Moreover, since our choice of $d^k$ satisfies \eqref{norm_cond}, we must have $x^{k+l} = x^k$ for all $l \ge 0$. Thus, to prove part (a), it suffices to consider the case when $\nabla F_\gamma(x^k) \neq 0$ for all $k$. In this case, we have from \eqref{armijo} that
\begin{align}
  F_\gamma(x^{k+1}) - F_\gamma(x^k) & \le  \sigma\alpha_k\nabla F_\gamma(x^k)^Td^k \stackrel{\rm (i)}{\le}  -c_1\sigma\underline{\alpha}\|\nabla F_\gamma(x^k)\|\|d^k\| \nonumber\\
                                    & \stackrel{\rm (ii)}{\le}   -\frac{c_1\sigma\underline{\alpha}}{c_2}\|\nabla F_\gamma(x^k)\|^2 \stackrel{\rm (iii)}{\le}  -\frac{c_1\sigma\underline{\alpha}}{c_2^3}\|d^k\|^2 \label{Fmono_F}\\
                                    & \stackrel{\rm (iv)}{\le}  -\frac{c_1\sigma\underline{\alpha}}{c_2^3}\|x^{k+1} - x^k\|^2,\label{Fmono_x}
\end{align}
where (i) follows from \eqref{angle_cond} and Lemma~\ref{lem2}, (ii) and (iii) follow from \eqref{norm_cond}, while (iv) follows from the definition of $x^{k+1}$.
In particular, we see that the sequence $\{F_\gamma(x^k)\}$ is nonincreasing. Since $F_\gamma$ is also level bounded due to Theorem~\ref{thm1} and the coerciveness of $f + P$, we conclude further that $\{x^k\}$ is bounded. Consequently, the nonincreasing sequence $\{F_\gamma(x^k)\}$ is bounded from below and is therefore convergent. Summing both sides of \eqref{Fmono_F} from $0$ to $\infty$ gives
\[
-\infty < \lim_{k\to \infty}F_\gamma(x^{k+1}) - F_\gamma(x^0) \le -\frac{c_1\sigma\underline{\alpha}}{c_2}\sum_{k=0}^\infty\|\nabla F_\gamma(x^k)\|^2,
\]
showing that $\lim_{k\rightarrow\infty}\nabla F_\gamma(x^k)=0$. Then it is routine to show from this and the continuity of $\nabla F_\gamma$ that any accumulation point of $\{x^k\}$, which exists due to the boundedness of $\{x^k\}$, is a stationary point of $F_\gamma$. To complete the proof, it remains to show that the whole sequence $\{x^k\}$ is indeed convergent.

To proceed, for notational simplicity, we assume without loss of generality that $\lim_{k\to\infty}F_\gamma(x^k)=0$. If $F_\gamma(x^k) = 0$ for some $k\ge 0$, since $F_\gamma$ is nonincreasing, we must then have $F_\gamma(x^{k+l}) = 0$ for any $l\ge 0$. Consequently, we see from \eqref{Fmono_F} that $\nabla F_\gamma(x^{k+l}) = 0$ for any $l \ge 0$, a contradiction to our assumption that $\nabla F_\gamma(x^t) \neq 0$ for any $t$. Thus, it remains to consider the case where $F_\gamma(x^k) > 0$ for all $k$.

In this case, let $\Omega$ denote the set of accumulation points of $\{x^k\}$, which is clearly a compact set and satisfies ${\rm dist}(x^k, \Omega)\to 0$. In addition, it is routine to show that $F_\gamma(\bar x) = 0$ whenever $\bar x \in \Omega$. Using these, Theorem \ref{thm2} (a) and \cite[Lemma 1]{AB09}, we conclude that there exist an exponent $\theta\in (0,1)$, $c >0$ and $N_0 > 0$ such that
\begin{equation}\label{KL_ineq}
    cF_\gamma(x^k)^{\theta} = c|F_\gamma(x^k)|^{\theta} \le \|\nabla F_\gamma(x^k)\|
\end{equation}
for all $k \ge N_0$. Combining this with the concavity of the function $s\to s^{1-\theta}$ (for $s > 0$), we have
\begin{equation*}
\begin{split}
    F_\gamma(x^k)^{1-\theta} - F_\gamma(x^{k+1})^{1-\theta} &\ge (1-\theta)F_\gamma(x^{k})^{-\theta}(F_\gamma(x^k) - F_\gamma(x^{k+1}))\\
    & \ge c(1-\theta)\frac{F_\gamma(x^{k}) - F_\gamma(x^{k+1})}{\|\nabla F_\gamma(x^k)\|}\\
    & \ge C_1 \|\nabla F_\gamma(x^k)\|\ge C_2 \|x^{k+1}-x^k\|,
\end{split}
\end{equation*}
for some positive constants $C_1$ and $C_2$, where the second inequality follows from \eqref{KL_ineq}, the third inequality follows from \eqref{Fmono_F} and the last inequality follows from \eqref{Fmono_x}.
Summing both sides of the above relation from $N_0$ to $\infty$, we see further that
\[
    \sum_{k=N_0}^\infty\|x^{k+1} - x^k\|  \le  \frac{1}{C_2}[F_\gamma(x^{N_0})^{1-\theta} - \lim_{N\to \infty}F_\gamma(x^{N+1})^{1-\theta}]
    \le   \frac{1}{C_2}F_\gamma(x^{N_0})^{1-\theta},
\]
which implies that $\{x^k\}$ is a Cauchy sequence. Thus, the whole sequence $\{x^k\}$ is actually convergent. This completes the proof.

(b) Suppose in addition that Assumption~\ref{assum1} holds and we again focus on the case where $\nabla F_\gamma(x^k)\neq 0$ for all $k$. Then in view of \eqref{eq1}, the convergence of $\{x^k\}$ to a stationary point $\bar x$ of $F_\gamma$ and the fact that $\{F_\gamma(x^k)\}$ is nonincreasing, we conclude that there exist $c > 0$ and an integer $N$ such that
\begin{equation*}
    \|\nabla F_\gamma(x^k)\| \ge c\sqrt{F_\gamma(x^k) - F_\gamma(\bar{x})}
\end{equation*}
whenever $k \ge N$. Combining this with \eqref{Fmono_F}, we obtain further that
\begin{equation*}
\begin{split}
  F_\gamma(x^k) - F_\gamma(x^{k+1}) & \ge \frac{c_1\sigma\underline{\alpha}}{c_2}\|\nabla F_\gamma(x^k)\|^2 \ge C_0(F_\gamma(x^k) - F_\gamma(\bar{x})),
\end{split}
\end{equation*}
for some $C_0 > 0$, and one can always choose $C_0 \in (0,1)$ without loss of generality, since $F_\gamma(x^k)\ge F_\gamma(\bar x)$ for all $k$.
Then we have, upon rearranging terms, that
\begin{equation*}
   F_\gamma(x^{k+1}) - F_\gamma(\bar x) \le (1- C_0) [ F_\gamma(x^k) - F_\gamma(\bar x)]
\end{equation*}
whenever $k\ge N$. Since $1- C_0\in (0,1)$, this shows that $\{F_\gamma(x^k)\}$ is (at least) $Q$-linearly convergent.

Next, we note from \eqref{Fmono_x} that there exist $C$, $C_3 > 0$ and $\eta\in (0,1)$ such that
\[
\|x^{k+1} - x^k\| \le C\sqrt{F_\gamma(x^k) - F_\gamma(x^{k+1})} \le C\sqrt{F_\gamma(x^k) - F_\gamma(\bar x)} \le C_3 \eta^k,
\]
where the second inequality follows from the fact that $\{F_\gamma(x^k)\}$ is nonincreasing, while the last inequality follows from the fact that $\{F_\gamma(x^k)\}$ is $Q$-linearly convergent.
Consequently, we have
\[
\|x^k - \bar x\| \le \sum_{t = k}^\infty\|x^{t+1} - x^t\| \le \frac{C_3\eta^k}{1-\eta},
\]
showing that $\{x^k\}$ is (at least) $R$-linearly convergent.
This completes the proof.
\end{proof}

On passing, we would like to point out that the algorithm we consider here is different from those considered in \cite{FBE16}. In \cite{FBE16}, in each iteration of their algorithms, after moving along the search direction $d^k$ with a certain stepsize, they induce sufficient descent by performing one step of proximal gradient algorithm on the function $f+P$. In contrast, we choose $d^k$ to satisfy \eqref{angle_cond} and \eqref{norm_cond} without resorting to the proximal gradient update in each iteration.

\section{Applications to difference-of-convex programming}\label{sec:DC}

In this section, we describe a class of problems that can be reformulated into \eqref{P0}. We also derive sufficient conditions so that the results in Section~\ref{sec:FBE} can be applied to deducing the level-boundedness of the corresponding forward-backward envelope.

We first describe our class of problems. This is a class of regularized least squares problems and arises frequently in applications such as statistical machine learning. The problems take the following form:
\begin{equation}\label{P1}
  v_{\mu_1,\mu_2}:= \min_z \ \ J(z) := \frac12 \|Az - b\|^2 + \mu_1 H_1(z) - \mu_2 H_2(z),
\end{equation}
where $\mu_1\ge \mu_2 > 0$, $A\in \R^{m\times n}$ and $b\in \R^m$, the regularization functions $H_1(z)$ and $H_2(z)$ are proper closed convex functions and there exists a norm $\rho$ such that
\[
0\le H_2(z)\le \min\{\rho(z),H_1(z)\}\ \ \ \mbox{for all}\ z.
\]
In particular, $H_2$ is continuous, and we also conclude from the above assumptions that $v_{\mu_1,\mu_2} \ge 0$ for any $\mu_1\ge \mu_2 > 0$.
Concrete examples of regularization functions that satisfy the above assumptions include
\begin{itemize}
  \item $H_1(z) = \|z\|_1$ and $H_2(z) = \|z\|$, and $\mu_1 = \mu_2$. This is known as the $\ell_{1-2}$ regularization; see, for example, \cite{YLHX14};
  \item $H_1(z) = \|z\|_1$ and $H_2(z) = \sum_{i=1}^k|z_{[i]}|$, where $z_{[i]}$ denotes the $i$th largest element in magnitude, and $k\le n$; see, for example, \cite{WLZ15}.
  \item $H_1(z) = \|z\|_1$ and $H_2(z) = \sum_{i=1}^n\int_0^{|z_i|}\frac{[\min(\theta\lambda,t)-\lambda]_+}{(\theta-1)\lambda}dt~(\theta>2)$, and $\mu_1 = \mu_2 = \lambda$. This is known as the smoothly clipped absolute deviation (SCAD) regularization; see, for example, \cite{FL11,Gong13}.
  \item $H_1(z) = \|z\|_1$ and $H_2(z) = \sum_{i=1}^n\int_0^{|z_i|}\min(1,t/(\theta\lambda))dt~(\theta > 0)$, and $\mu_1 = \mu_2 = \lambda$. This is known as the minimax concave penalty (MCP) regularization; see, for example, \cite{Gong13,Zhang10}.
\end{itemize}

Since $H_2$ can be nonsmooth in general as in the first two examples above, it appears that \eqref{P1} does not readily take the form of \eqref{P0}. Nonetheless, one can equivalently reformulate \eqref{P1} as
\begin{equation}\label{P2}
  \min_{y,z} \ \  \underbrace{\frac12 \|Az - b\|^2 - \mu_2\langle y,z\rangle}_{f} + \underbrace{\mu_1 H_1(z) + \mu_2H_2^*(y)}_{P},
\end{equation}
where $H_2^*(y) := \sup_{z}\{\langle y,z\rangle - H_2(z)\}$ is the convex conjugate of $H_2$. This is in the form of \eqref{P0}. Thus, one can then consider minimizing the corresponding forward-backward envelope $F_\gamma$ instead. In the next proposition, we give a sufficient condition so that the $F_\gamma$ corresponding to \eqref{P2} is level bounded.

\begin{proposition}\label{prop4.1}{\bf (Coerciveness of $f + P$ in \eqref{P2})}
  Suppose that the objective function $J$ in \eqref{P1} is coercive.
  Then the objective function in \eqref{P2} is coercive. Consequently, for any $\gamma\in (0,\frac1L)$, the corresponding $F_\gamma$ is level bounded.
\end{proposition}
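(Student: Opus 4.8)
The plan is to prove that the objective function of \eqref{P2}, which I abbreviate by
\[
g(y,z):=\underbrace{\frac12\|Az-b\|^2-\mu_2\langle y,z\rangle}_{f(y,z)}+\underbrace{\mu_1 H_1(z)+\mu_2 H_2^*(y)}_{P(y,z)},
\]
is coercive, and then to invoke Theorem~\ref{thm1}. Two structural observations drive the argument. First, the $y$-variable is effectively confined to a bounded set: since $0\le H_2\le\rho$ and convex conjugation reverses inequalities, $H_2^*\ge\rho^*$, and $\rho^*$ is the indicator function of the unit ball of the norm dual to $\rho$, which is bounded in $\R^n$. Hence ${\rm dom}\,H_2^*\subseteq B(0,M)$ for some $M>0$, so in particular $g(y,z)=+\infty$ whenever $\|y\|>M$. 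Second, writing $J(z)=\frac12\|Az-b\|^2+\mu_1 H_1(z)-\mu_2 H_2(z)$ and using the Fenchel--Young inequality $H_2(z)+H_2^*(y)\ge\langle y,z\rangle$ gives
\[
g(y,z)-J(z)=\mu_2\left(H_2^*(y)+H_2(z)-\langle y,z\rangle\right)\ge 0,
\]
so that $g(y,z)\ge J(z)$ for all $(y,z)$.

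With these two facts I would establish coerciveness directly. Let $(y^k,z^k)$ be any sequence with $\|(y^k,z^k)\|\to\infty$. Terms for which $g(y^k,z^k)=+\infty$ cannot lower $\liminf g(y^k,z^k)/\|(y^k,z^k)\|$, so attention may be restricted to the terms with $g$ finite; for these $\|y^k\|\le M$, whence $\|z^k\|\to\infty$ and $\|(y^k,z^k)\|\le\|z^k\|+M$. Because $J$ is coercive, there are $r,\alpha>0$ with $J(z)>\frac\alpha2\|z\|$ whenever $\|z\|>r$, so for all large $k$,
\[
\frac{g(y^k,z^k)}{\|(y^k,z^k)\|}\ge\frac{J(z^k)}{\|z^k\|+M}\ge\frac{(\alpha/2)\|z^k\|}{\|z^k\|+M}\longrightarrow\frac\alpha2>0 .
\]
Therefore $\liminf_{\|(y,z)\|\to\infty}g(y,z)/\|(y,z)\|\ge\alpha/2>0$, i.e., the objective function of \eqref{P2} is coercive.

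Finally, to get the stated consequence I would observe that $f$ in \eqref{P2} is a quadratic, hence twice continuously differentiable with a constant Hessian and therefore with all eigenvalues in some interval $[-L,L]$, while $P=\mu_1 H_1+\mu_2 H_2^*$ is a proper closed convex function, being a separable sum of proper closed convex functions; thus \eqref{P2} is an instance of \eqref{P0}, and Theorem~\ref{thm1} applied to the coercive objective $f+P$ gives level-boundedness of the associated $F_\gamma$ for every $\gamma\in(0,\frac1L)$. The crux of the argument is the first observation above: the assumption $H_2\le\rho$ is exactly what forces ${\rm dom}\,H_2^*$ to be bounded, so the auxiliary variable $y$ created by the reformulation cannot escape to infinity; once this is combined with the Fenchel--Young bound $g\ge J$, joint coerciveness reduces to the assumed coerciveness of $J$. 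The only mild technicality is handling the $+\infty$ values of $g$ when passing to the $\liminf$ along an arbitrary sequence, which is routine.
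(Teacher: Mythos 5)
Your proof is correct and takes essentially the same route as the paper: both arguments rest on the bound $H_2^*\ge\rho^*=\delta_{\rho^\circ(\cdot)\le 1}$ (so $y$ is confined to a bounded set) together with the Fenchel--Young inequality to get $f+P\ge J(z)+\delta_{\rho^\circ(\cdot)\le 1}(y)$, and then deduce coerciveness of this separable lower bound before invoking Theorem~\ref{thm1}. The only difference is cosmetic: the paper declares the last coerciveness step ``routine,'' whereas you carry out the sequence argument explicitly.
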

\begin{proof}
  Since $H_2(z) \le \rho(z)$ for all $z$, we have from the definition of conjugate functions that $H_2^*(u) \ge \rho^*(u) = \delta_{\rho^\circ(\cdot)\le 1}(u)$ for all $u$, where $\rho^\circ$ is the dual norm of $\rho$. Using this and the definitions of $f$ and $P$ in \eqref{P2}, we see that
  \begin{equation}\label{ineq}
  \begin{split}
    (f+P)(y,z) & = \frac12 \|Az - b\|^2 - \mu_2\langle y,z\rangle + \mu_1 H_1(z) + \mu_2H_2^*(y)\\
    & = \frac12 \|Az - b\|^2 - \mu_2\langle y,z\rangle + \mu_1 H_1(z) + \mu_2H_2^*(y) + \delta_{\rho^\circ(\cdot)\le 1}(y)\\
    & \ge \frac12 \|Az - b\|^2 + \mu_1 H_1(z) - \mu_2H_2(z) + \delta_{\rho^\circ(\cdot)\le 1}(y) = J(z) + \delta_{\rho^\circ(\cdot)\le 1}(y).
  \end{split}
  \end{equation}
   From our assumption, $J$ is coercive. In addition, the function $y\mapsto \delta_{\rho^\circ(\cdot)\le 1}(y)$ is level bounded, and hence coercive due to \cite[Proposition~3.1.3]{AuT03}. Then it is routine to check that the function $(y,z)\mapsto J(z) + \delta_{\rho^\circ(\cdot)\le 1}(y)$ is also coercive. Consequently, $f + P$ is coercive in view of \eqref{ineq}. As a consequence of this and Theorem~\ref{thm1}, we conclude that, for any $\gamma\in (0,\frac1L)$, the corresponding $F_\gamma$ is level bounded. This completes the proof.
\end{proof}

We now present some sufficient conditions for the function $J$ in \eqref{P1} to be coercive.

\begin{proposition}\label{corol}{\bf (Coerciveness of $J$)}
The function $J$ in \eqref{P1} is coercive when
\begin{enumerate}[{\rm (a)}]
  \item $\mu_1 > \mu_2$ and $H_1$ is level bounded; or
  \item $\mu_1 = \mu_2$ and $H_1$ and $H_2$ are norms such that $H_1(z) > H_2(z)$ whenever $\|z\|_0 \ge r_A+1$, where $r_A = \max\{i:\;\mbox{Any $i$ columns of $A$ are linearly independent.}\}$.
\end{enumerate}
\end{proposition}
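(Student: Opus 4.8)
The plan is to show $\liminf_{\|z\|\to\infty} J(z) = \infty$ in each case by exploiting the lower bound $J(z) \ge \frac12\|Az-b\|^2 + \mu_1 H_1(z) - \mu_2 H_2(z)$ together with the structural hypotheses. For part (a), the key observation is that $\mu_1 H_1(z) - \mu_2 H_2(z) \ge (\mu_1 - \mu_2) H_1(z) \ge 0$ since $0 \le H_2(z) \le H_1(z)$ and $\mu_1 > \mu_2$. Hence $J(z) \ge \frac12\|Az-b\|^2 + (\mu_1-\mu_2)H_1(z)$, and because $H_1$ is level bounded (hence coercive, being proper closed convex, by \cite[Proposition~3.1.3]{AuT03}), the term $(\mu_1-\mu_2)H_1(z)$ already drives $J$ to infinity; dropping the nonnegative $\frac12\|Az-b\|^2$ term leaves a coercive function, so $J$ is coercive. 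This part should be essentially immediate.

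For part (b), with $\mu_1 = \mu_2 =: \mu$, the difference $\mu(H_1(z) - H_2(z))$ is only nonnegative, not coercive, so coerciveness must come from the interplay of $\|Az-b\|^2$ and $H_1 - H_2$. I would argue by contradiction: suppose there is a sequence $\{z^k\}$ with $\|z^k\|\to\infty$ but $J(z^k)$ bounded. Boundedness of $\frac12\|Az^k - b\|^2$ forces $\|Az^k\|$ to stay bounded, so, writing $\hat z^k := z^k/\|z^k\|$ and passing to a subsequence, $\hat z^k \to \hat z$ with $\|\hat z\| = 1$ and $A\hat z = 0$, i.e. $\hat z \in \ker A \setminus\{0\}$. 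Boundedness of $J(z^k)$ also forces $\mu(H_1(z^k) - H_2(z^k))$ bounded; dividing by $\|z^k\|$ and using positive homogeneity of the norms $H_1, H_2$ gives $H_1(\hat z^k) - H_2(\hat z^k) \to 0$, hence by continuity $H_1(\hat z) = H_2(\hat z)$. Now the hypothesis $H_1(z) > H_2(z)$ whenever $\|z\|_0 \ge r_A + 1$ forces $\|\hat z\|_0 \le r_A$, i.e. $\hat z$ has at most $r_A$ nonzero entries. But by the definition of $r_A$, any $r_A$ columns of $A$ are linearly independent, so a nonzero vector supported on at most $r_A$ coordinates cannot lie in $\ker A$ — contradicting $A\hat z = 0$, $\hat z \ne 0$.

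The main obstacle is the step in part (b) linking boundedness of $H_1(z^k) - H_2(z^k)$ back to the limiting direction $\hat z$: one must be careful that $H_1 - H_2 \ge 0$ (so boundedness of $J$ really does bound this difference from above), that dividing by $\|z^k\|$ is legitimate (using exact positive homogeneity of norms, $H_i(z^k) = \|z^k\| H_i(\hat z^k)$), and that continuity of $H_1, H_2$ (they are finite norms, hence continuous everywhere on $\R^n$) lets us pass to the limit to get $H_1(\hat z) = H_2(\hat z)$. After that, identifying $\hat z \in \ker A$ with support size $\le r_A$ and invoking the maximality in the definition of $r_A$ closes the argument cleanly. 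I would also note at the outset that in both cases $J$ is proper and closed (it is a sum of a continuous quadratic, a proper closed convex function $\mu_1 H_1$, and a continuous function $-\mu_2 H_2$), so that coerciveness is exactly the statement $\liminf_{\|z\|\to\infty} J(z) = \infty$ and feeds directly into Proposition~\ref{prop4.1}.
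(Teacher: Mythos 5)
Your part (a) is correct and essentially the paper's argument (the paper lower-bounds $\tfrac12\|Az-b\|^2+\mu_2(H_1-H_2)$ by $v_{\mu_2,\mu_2}$ where you simply use nonnegativity of each piece; both give $J(z)\ge (\mu_1-\mu_2)H_1(z)+\mathrm{const}$ and conclude via coercivity of $H_1$). Your part (b) also follows the same route as the paper: argue by contradiction, normalize the bad sequence, extract a limiting direction $\hat z$ with $A\hat z=0$ and $H_1(\hat z)=H_2(\hat z)$, and contradict the support condition via the definition of $r_A$.

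There is, however, one genuine gap in part (b): you negate the wrong property. In this paper ``coercive'' means $\liminf_{\|z\|\to\infty} J(z)/\|z\|>0$, which for the nonconvex function $J$ is strictly stronger than level-boundedness $\liminf_{\|z\|\to\infty}J(z)=\infty$; your closing remark that ``coerciveness is exactly the statement $\liminf_{\|z\|\to\infty} J(z)=\infty$'' is false here (the equivalence cited from \cite[Proposition~3.1.3]{AuT03} holds only for convex functions, and $J$ contains the concave term $-\mu_2H_2$). The distinction matters downstream: Proposition~\ref{prop4.1} and Theorem~\ref{thm1} genuinely use the ratio form of coercivity. By starting from a sequence with $J(z^k)$ \emph{bounded} you only rule out failure of level-boundedness; you say nothing about sequences along which $J(z^k)\to\infty$ sublinearly. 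Fortunately the fix is minor and brings you exactly to the paper's proof: negating coercivity yields (after passing to a subsequence) $z^k$ with $\|z^k\|\to\infty$ and $J(z^k)\le \|z^k\|/k$; since both $\tfrac12\|Az^k-b\|^2$ and $\mu(H_1(z^k)-H_2(z^k))$ are nonnegative, each is at most $\|z^k\|/k$, and dividing by $\|z^k\|^2$ and $\|z^k\|$ respectively still gives $A\hat z=0$ and $H_1(\hat z)=H_2(\hat z)$ in the limit, after which your contradiction goes through unchanged. (The paper normalizes by $H_2(z^k)$ rather than $\|z^k\|$, which is equivalent.) Your verification points about homogeneity and continuity of the norms, and about $H_1-H_2\ge 0$, are all correct and are indeed the places where care is needed.
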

\begin{proof}
(a) Recall from \eqref{P1} and the definition of $v_{\mu_2,\mu_2}$ that
  \begin{equation*}
  \begin{split}
    J(z) = \frac12 \|Az - b\|^2 + \mu_1 H_1(z) - \mu_2H_2(z)
    \ge v_{\mu_2,\mu_2} + (\mu_1-\mu_2)H_1(z).
  \end{split}
  \end{equation*}
  Since $H_1$ is proper closed convex and level bounded, it is also coercive according to \cite[Proposition~3.1.3]{AuT03}. The coerciveness of $J$ now follows immediately.

  (b) Suppose to the contrary that $J$ is not coercive. Since $H_2$ is a norm, we then have from the definition that
      \begin{equation*}
         \liminf_{H_2(z)\rightarrow\infty}\frac{J(z)}{H_2(z)} \le 0.
      \end{equation*}
Consequently, there exists a sequence $\{z^k\}$ such that $H_2(z^k)\rightarrow\infty$ and
\begin{equation*}
    \frac{\frac{1}{2}\|Az^k - b\|^2 + \mu(H_1(z^k) - H_2(z^k))}{H_2(z^k)} \le \frac{1}{k};
\end{equation*}
here, we use $\mu$ to denote the common value of $\mu_1$ and $\mu_2$.
Since $H_1(z^k) \ge H_2(z^k)$, we have further that for all $k$,
\begin{equation}\label{rel0}
  \frac{1}{2}\|Az^k - b\|^2   \le  \frac{H_2(z^k)}{k} \ \ \ {\rm and}\ \ \
  \mu(H_1(z^k) - H_2(z^k))  \le   \frac{H_2(z^k)}{k}.
\end{equation}
Since $H_2$ is a norm, by passing to a subsequence if necessary, we assume without loss of generality that $\frac{z^k}{H_2(z^k)}\rightarrow d$ for some $d\in \R^n$. Thus, $H_2(d) = 1$ and hence $d\neq 0$ in particular. Dividing both sides of the first and second inequalities in \eqref{rel0} by $[H_2(z^k)]^2$ and $H_2(z^k)$ respectively, using the fact that $H_1$ is a norm and passing to the limit, we obtain further that
\begin{equation}\label{rel}
       Ad = 0\ \ \ {\rm and}\ \ \ H_1(d)\le 1.
\end{equation}
The first relation in \eqref{rel} together with the fact that $d\neq 0$ and the assumption on the columns of $A$ implies that $\|d\|_0 \ge r_A + 1$.
Hence, we have from the assumption that $H_2(d) < H_1(d)$. This together with the second relation in \eqref{rel}
gives $H_2(d) < H_1(d) \le 1$, which is a contradiction to the fact that $H_2(d) = 1$. This completes the proof.
\end{proof}

\begin{example}\label{example}
We give some concrete examples satisfying the conditions in Proposition~\ref{corol}:
\begin{enumerate}[{\rm (a)}]
  \item $\mu_1 > \mu_2$ and $H_1(z) = \|z\|_p$, $p\in [1,\infty)$: such $H_1$ are clearly level bounded.
  \item $\mu_1 = \mu_2$ and $H_1(z) = \|z\|_1$, with $H_2(z) = \|z\|_p$, $p \in (1,\infty)$, and $A$ does not have zero columns. In this case, it is easy to see that $r_A\ge 1$. Moreover, $\|z\|_1 > \|z\|_p$ whenever $\|z\|_0 > 1$ and $p \in (1,\infty)$. This is a consequence of the fact that
  $z_1^p + \cdots + z_n^p < 1$ for any $z\ge 0$ satisfying $\sum_{i=1}^n z_i = 1$ and $\|z\|_0 > 1$. In particular, $\|z\|_1 > \|z\|_p$ whenever $\|z\|_0 \ge r_A+1(>1)$ and $p \in (1,\infty)$.
  \item $\mu_1=\mu_2$ and $H_1(z) = \|z\|_1$, with $H_2(z) = \sum_{i=1}^k|z_{[i]}|$, where $z_{[i]}$ denotes the $i$th largest element in magnitude, $k\le {\rm rank}(A)$, and any $k$ columns of $A$ are linearly independent. In this case, we have $r_A\ge k$, and moreover $\|z\|_1>\sum_{i=1}^k|z_{[i]}|$ whenever $\|z\|_0\ge k+1$ and hence, whenever $\|z\|_0 \ge r_A+1$.
\end{enumerate}
\end{example}

Before ending this section, we derive a bound $L$ on the magnitude of the eigenvalues of $\nabla^2 f(y,z)$ for the $f$ in \eqref{P2}, which is necessary for obtaining an upper bound on the $\gamma$ used in the corresponding forward-backward envelope. Note that for any $(y,z)$, we have
\begin{equation*}
  \nabla^2 f(y,z) = \begin{bmatrix}
    0 & -\mu_2I \\ -\mu_2I & A^TA
  \end{bmatrix}.
\end{equation*}
Hence, the operator norm can be upper bounded as follows
\begin{equation*}
\begin{split}
  \|\nabla^2 f(y,z)\| & = \sup_{\|(u,v)\| = 1}\left\|\begin{bmatrix}
    0 & -\mu_2I \\ -\mu_2I & A^TA
  \end{bmatrix} \begin{bmatrix}
    u\\v
  \end{bmatrix}\right\| \le \sup_{\|(u,v)\| = 1}\left\|\begin{bmatrix}
    0 & \mu_2 \\ \mu_2 & \|A^TA\|
  \end{bmatrix} \begin{bmatrix}
    \|u\|\\ \|v\|
  \end{bmatrix}\right\|\\
  & \le \lambda_{\max}\left(\begin{bmatrix}
    0 & \mu_2 \\ \mu_2 & \|A^TA\|
  \end{bmatrix}\right) = \frac{\lambda_{\max}(A^TA) + \sqrt{\lambda^2_{\max}(A^TA) + 4\mu_2^2}}{2},
\end{split}
\end{equation*}
where $\|A^TA\|$ denotes the operator norm of $A^TA$, which is equal to $\lambda_{\max}(A^TA)$, the maximum eigenvalue of $A^TA$.
Thus, one can set $L=\frac{\lambda_{\max}(A^TA) + \sqrt{\lambda^2_{\max}(A^TA) + 4\mu_2^2}}{2}$.

\section{Numerical experiments on least squares problems with $\ell_{1-2}$ regularization}\label{sec:num}

In this section, we perform numerical experiments to test the efficiency of solving \eqref{P1} via minimizing the corresponding $F_\gamma$. All the experiments are performed in MATLAB version R2015b on a desktop computer with a 3.6GHz CPU and 32G RAM, and all codes are written in MATLAB.

In our experiments, we take the least squares problems with $\ell_{1-2}$ regularization as our test problems. This class of problem is given by
\begin{equation}\label{P-1}
  \min_z \ h(z) := \frac12 \|Az - b\|^2 + \mu(\|z\|_1 - \|z\|),
\end{equation}
where $A\in \R^{m\times n}$ does not have zero columns, $b\in \R^m$ and $\mu > 0$.
This model is a special case of \eqref{P1} with $\mu_1 = \mu_2 = \mu$, $H_1(z) = \|z\|_1$ and $H_2(z) = \|z\|$, and has been considered in \cite{YLHX14} for sparse recovery.

We compare three different approaches for solving \eqref{P-1}: {\bf FBE$_{\rm L-BFGS}$}, {\bf NPG} and {\bf NPG$_{\rm major}$}. The first approach is based on our discussion of the forward-backward envelope, while the other two approaches are standard applications of proximal gradient type algorithms for solving \eqref{P-1}. These two latter approaches are included here as benchmark. We now discuss these approaches in further details below.

\paragraph{FBE$_{\rm L-BFGS}$.} In this approach, we apply Algorithm~\ref{algorithm} to minimizing $F_\gamma$ with $\gamma = 0.95/L$ under a specific choice of $d^k$ to be made explicit below. Here, $F_\gamma$ is the forward-backward envelope corresponding to the following equivalent reformulation of \eqref{P-1}:
\begin{equation}\label{P4}
\min_{y,z} \ \underbrace{\frac12 \|Az - b\|^2 - \mu\langle y,z\rangle}_{f} + \underbrace{\mu\|z\|_1 + \delta_{B(0,1)}(y)}_{P}
\end{equation}
and $L$ is computed as in the end of Section~\ref{sec:DC}.\footnote{$\lambda_{\max}(A^TA)$ is computed via the MATLAB code {\sf opts.issym = 1;
lambda= eigs(A*A',1,'LM',opts);} when $m > 2000$, and by {\sf lambda = norm(A*A')} otherwise.} It is easy to see that $f$ is analytic, $P$ is continuous on its domain, bounded below and subanalytic. Moreover, since $A$ has no zero columns, $f + P$ is coercive in view of Example~\ref{example} (b), Proposition~\ref{corol} (b) and Proposition~\ref{prop4.1}. Thus, according to Proposition~\ref{convergence}, the whole sequence generated by Algorithm~\ref{algorithm} converges to a stationary point of \eqref{P4}.

In our experiments below, we set $\sigma = 10^{-4}$, $\eta = 0.5$, $c_1 = \frac{1}{c_2} = 10^{-5}$ in Algorithm~\ref{algorithm}. Moreover, in each iteration, we compute a $d_{\rm B}$ as the output of \cite[Algorithm~9.1]{NoWr99} using a memory of $10$, and set
\[
d^k = \begin{cases}
  -d_{\rm B} & \mbox{if $-d_{\rm B}$ satisfies \eqref{angle_cond} and \eqref{norm_cond},}\\
  -\nabla F_\gamma(x^k) & \mbox{otherwise};
\end{cases}
\]
i.e., we use the limited-memory BFGS search direction, and resort to the steepest descent direction if \eqref{angle_cond} or \eqref{norm_cond} fails.\footnote{We note on passing that the computation of $\nabla F_\gamma$ is simple: it involves the proximal mapping of $P$ in \eqref{P4}, which boils down to evaluating the $\ell_1$ shrinkage operator (proximal mapping of $\ell_1$ norm) and the projection onto $B(0,1)$. In addition, in our numerical experiments below, the steepest descent direction was {\em never} invoked.}
Finally, we initialize the algorithm at the origin, and terminate it when
\[
\frac{\|\nabla F_\gamma(x^{k})\|}{\max\{1,F_\gamma(x^{k})\}} < tol
\]
for some $tol > 0$.

\paragraph{NPG.} In this approach, we apply the nonmonotone proximal gradient method discussed in \cite{WrNoFi09} (see also \cite{Gong13} and \cite[Appendix~A]{ChLuPo14}) for solving \eqref{P-1}. Following the notation in \cite[Appendix~A, Algorithm~1]{ChLuPo14}, in our experiments, we apply the method with $f(z) = \frac12\|Az - b\|^2$ and $P(z) = \mu(\|z\|_1 - \|z\|)$, and set $\tau = 2$, $c = 10^{-4}$, $M = 4$, $L^0_0=1$ and
\[
L_k^0:= \min\left\{\max\left\{\frac{\|A(z^k - z^{k-1})\|^2}{\|z^k - z^{k-1}\|^2},10^{-8}\right\},10^8\right\}
\]
for $k\ge 1$. We note that the subproblem in \cite[Equation~A.5]{ChLuPo14} now becomes
\begin{equation}\label{subproblem2}
\min_z \ \ \langle A^T(Az^k - b),z - z^k\rangle + \frac{L_k}{2}\|z - z^k\|^2 + \mu(\|z\|_1 - \|z\|);
\end{equation}
we will discuss its closed form solution in Appendix~\ref{AppendixA}. Finally, we initialize the algorithm at the origin, and terminate it when
\[
\frac{\|z^k - z^{k-1}\|}{\max\{1,h(z^k)\}} < tol,
\]
where $tol > 0$.

\paragraph{NPG$_{\rm major}$.} This approach is the same as {\bf NPG} except that in each iteration, the subproblem takes the following form (in place of \eqref{subproblem2}):
\begin{equation}\label{subproblem3}
\min_z \ \ \langle A^T(Az^k - b) - \mu \xi^k,z - z^k\rangle + \frac{L_k}{2}\|z - z^k\|^2 + \mu \|z\|_1,
\end{equation}
for a fixed $\xi^k\in \partial \|z^k\|$; i.e., we replace the function $-\|z\|$ by its majorant $-\|z^k\| - \langle \xi^k,z - z^k\rangle$. Note that the subproblem \eqref{subproblem3} has a closed form solution in terms of the $\ell_1$ shrinkage operator.
In our experiments, we use the same parameters $\tau$, $c$, $M$ and $L_k^0$ as in {\bf NPG}. We initialize this algorithm also at the origin, and terminate it when
\[
\frac{\|z^k - z^{k-1}\|}{\max\{1,h(z^k)\}} < tol,
\]
where $tol > 0$.

In our first experiment, we compare {\bf FBE$_{\rm L-BFGS}$} against {\bf NPG} and {\bf NPG$_{\rm major}$} for solving \eqref{P-1} on randomly generated instances.  %
%
%
%
These random instances are generated as follows.
We first generate a matrix $A\in \R^{m\times n}$ with i.i.d. standard Gaussian entries.\footnote{Thus, with high probability, $A$ does not have zero columns.} The matrix is further normalized so that each column has unit norm. Next, we choose an index set $T\subseteq \{1,\ldots,n\}$ of size $s$ uniformly at random and generate a vector $y\in \R^{s}$ with i.i.d. standard Gaussian entries. The measurement vector $b$ is then generated as $b = A_T y + \sigma \hat n$, where $A_T$ is the submatrix formed by those columns of $A$ indexed by $T$, $\sigma > 0$ and $\hat n\in \R^{m}$ is a random vector with i.i.d. standard Gaussian entries.

In our numerical tests, for each $(m,n,s) = (720i,2560i,160i)$ for $i=1,\ldots,10$, we generate $10$ random instances as described above with $\sigma = 10^{-2}$.\footnote{The dimension parameters are similar to those used in \cite[Section~3]{LuPZh12}, except that our $s$ is twice as large. This indicates that the test instances we consider here are harder in terms of sparse recovery.} We terminate {\bf FBE$_{\rm L-BFGS}$} at $tol = 10^{-6}$, and the other two algorithms at $tol = 10^{-4}$. The computational results are presented in Tables~\ref{t1} and \ref{t2}, which correspond to \eqref{P-1} with $\mu = 5\times 10^{-4}$ and $10^{-3}$, respectively. In these tables, we denote the algorithm {\bf FBE$_{\rm L-BFGS}$} by FBE, the algorithm {\bf NPG} by NPG and the algorithm {\bf NPG$_{\rm major}$} by Major. We report the time for computing $\lambda_{\max}(A^TA)$ ({\sc t}$_{\lambda_{\max}}$), the number of iterations (iter), CPU time in seconds (CPU)\footnote{The CPU times under the FBE column do {\em not} include the times for computing $\lambda_{\max}(A^TA)$. The latter are reported separately in the fourth column of each table.} and the terminating function values (fval)\footnote{For all three algorithms, we output the $z^k$ at termination and compute the function value $h(z^k)$.}, averaged over the $10$ random instances. We see that {\bf FBE$_{\rm L-BFGS}$} generally outperforms the other two algorithms in terms of both the number of iterations and CPU time. Moreover, the function values obtained at termination from all three algorithms are comparable, with {\bf FBE$_{\rm L-BFGS}$} giving slightly smaller function values. In addition, the performance of {\bf FBE$_{\rm L-BFGS}$} becomes better as the dimension increases and $\mu$ decreases.

\begin{table}[h]
\caption{\small Results for random $A$ with unit column norms, $\mu = 5\times 10^{-4}$}
 \label{t1}
\hspace{-0.8 cm}
\begin{footnotesize}
\begin{tabular}{|c c c ||r||r r r||r r r||r r r|}
\hline \multicolumn{3}{|c||}{size} & & \multicolumn{3}{c||}{iter} &
\multicolumn{3}{c||}{CPU} & \multicolumn{3}{c|}{fval}\\

\multicolumn{1}{|c}{ $m$} & \multicolumn{1}{c}{ $n$} &
\multicolumn{1}{c||}{ $s$} & {\sc t$_{\lambda_{\max}}$} &
\multicolumn{1}{c}{\sc FBE} & \multicolumn{1}{c}{\sc NPG} & \multicolumn{1}{c||}{\sc Major} &
\multicolumn{1}{c}{\sc FBE} & \multicolumn{1}{c}{\sc NPG} & \multicolumn{1}{c||}{\sc Major} &
\multicolumn{1}{c}{\sc FBE} & \multicolumn{1}{c}{\sc NPG} & \multicolumn{1}{c|}{\sc Major} \\

\hline
      720 &    2560 &     160 &   0.1 &  1371 &  3596 &  3595 &   5.1 &   6.9 &   6.8 & 5.51199e-02 & 5.51702e-02 & 5.51662e-02\\
     1440 &    5120 &     320 &   0.7 &  1552 &  4148 &  4108 &  21.5 &  35.6 &  35.0 & 1.20602e-01 & 1.20660e-01 & 1.20663e-01\\
     2160 &    7680 &     480 &   0.7 &  1621 &  4203 &  4221 &  46.5 &  77.7 &  77.8 & 1.81414e-01 & 1.81472e-01 & 1.81473e-01\\
     2880 &   10240 &     640 &   1.4 &  1579 &  4326 &  4293 &  77.3 & 139.2 & 137.7 & 2.47298e-01 & 2.47356e-01 & 2.47359e-01\\
     3600 &   12800 &     800 &   2.5 &  1653 &  4479 &  4461 & 125.8 & 225.5 & 224.1 & 3.08809e-01 & 3.08873e-01 & 3.08874e-01\\
     4320 &   15360 &     960 &   3.8 &  1672 &  4484 &  4484 & 177.2 & 317.0 & 316.7 & 3.77171e-01 & 3.77235e-01 & 3.77235e-01\\
     5040 &   17920 &    1120 &   6.3 &  1757 &  4506 &  4602 & 251.7 & 433.8 & 442.7 & 4.39237e-01 & 4.39302e-01 & 4.39301e-01\\
     5760 &   20480 &    1280 &   8.2 &  1702 &  4528 &  4539 & 314.3 & 559.1 & 561.5 & 4.98815e-01 & 4.98878e-01 & 4.98882e-01\\
     6480 &   23040 &    1440 &  11.1 &  1708 &  4546 &  4539 & 401.5 & 719.2 & 716.2 & 5.69171e-01 & 5.69241e-01 & 5.69240e-01\\
     7200 &   25600 &    1600 &  15.1 &  1756 &  4578 &  4582 & 519.0 & 910.2 & 907.1 & 6.31546e-01 & 6.31609e-01 & 6.31614e-01\\
\hline
\end{tabular}
\end{footnotesize}
\end{table}

\begin{table}[h]
\caption{\small Results for random $A$ with unit column norms, $\mu = 10^{-3}$}
 \label{t2}
 \hspace{-0.8 cm}
\begin{footnotesize}
\begin{tabular}{|c c c ||r||r r r||r r r||r r r|}
\hline \multicolumn{3}{|c||}{size}& & \multicolumn{3}{c||}{iter} &
\multicolumn{3}{c||}{CPU} & \multicolumn{3}{c|}{fval}\\

\multicolumn{1}{|c}{ $m$} & \multicolumn{1}{c}{ $n$} &
\multicolumn{1}{c||}{ $s$} & {\sc t$_{\lambda_{\max}}$} & \multicolumn{1}{c}{\sc FBE} & \multicolumn{1}{c}{\sc NPG} & \multicolumn{1}{c||}{\sc Major} &
\multicolumn{1}{c}{\sc FBE} & \multicolumn{1}{c}{\sc NPG} & \multicolumn{1}{c||}{\sc Major} &
\multicolumn{1}{c}{\sc FBE} & \multicolumn{1}{c}{\sc NPG} & \multicolumn{1}{c|}{\sc Major} \\

\hline
     720 &    2560 &     160 &   0.1 &   898 &  2045 &  2054 &   3.2 &   3.7 &   3.7 & 1.16014e-01 & 1.16035e-01 & 1.16034e-01 \\
    1440 &    5120 &     320 &   0.7 &   966 &  2240 &  2225 &  13.1 &  18.6 &  18.4 & 2.45325e-01 & 2.45348e-01 & 2.45350e-01 \\
    2160 &    7680 &     480 &   0.6 &  1017 &  2291 &  2280 &  28.1 &  40.3 &  40.0 & 3.68869e-01 & 3.68896e-01 & 3.68897e-01 \\
    2880 &   10240 &     640 &   1.3 &  1017 &  2339 &  2327 &  48.6 &  72.7 &  72.0 & 4.94971e-01 & 4.94997e-01 & 4.94996e-01 \\
    3600 &   12800 &     800 &   2.4 &  1058 &  2398 &  2388 &  78.7 & 117.1 & 116.3 & 6.23388e-01 & 6.23416e-01 & 6.23418e-01 \\
    4320 &   15360 &     960 &   3.6 &  1057 &  2383 &  2384 & 109.2 & 163.6 & 163.1 & 7.50537e-01 & 7.50569e-01 & 7.50568e-01 \\
    5040 &   17920 &    1120 &   5.9 &  1051 &  2386 &  2376 & 147.5 & 223.0 & 221.8 & 8.69880e-01 & 8.69912e-01 & 8.69912e-01 \\
    5760 &   20480 &    1280 &   7.8 &  1064 &  2469 &  2433 & 193.8 & 296.7 & 293.5 & 1.01434e+00 & 1.01437e+00 & 1.01437e+00 \\
    6480 &   23040 &    1440 &  10.5 &  1092 &  2482 &  2448 & 251.7 & 380.2 & 375.8 & 1.13181e+00 & 1.13186e+00 & 1.13186e+00 \\
    7200 &   25600 &    1600 &  14.1 &  1084 &  2388 &  2409 & 313.4 & 458.8 & 461.6 & 1.25184e+00 & 1.25189e+00 & 1.25189e+00 \\
\hline
\end{tabular}
\end{footnotesize}
\end{table}

We also test some variants of {\bf FBE}$_{\rm L-BFGS}$ with a choice of $\gamma$ other than $0.95/L$. Specifically, we adopt the same algorithmic parameters as in {\bf FBE}$_{\rm L-BFGS}$ except that we consider three different $\gamma$'s: $0.5/L$, $0.7/L$ and $0.9/L$. These variants are denoted by FBE$_{0.5}$, FBE$_{0.7}$ and FBE$_{0.9}$ respectively. We use the same random instances from the previous experiment for $\mu = 10^{-3}$ in our test. The numerical results, averaged over the $10$ random instances for each $(m,n,s)$, are shown in Table~\ref{t3}. We see that FBE$_{0.9}$ takes the fewest number of iteration and the least CPU time, while it returns similar function values as FBE$_{0.5}$ and FBE$_{0.7}$. Moreover, comparing with Table~\ref{t2}, we see that {\bf FBE}$_{\rm L-BFGS}$ (whose $\gamma = 0.95/L$) is the fastest.

\begin{table}[h]
\caption{\small Results for FBE$_{0.5}$, FBE$_{0.7}$ and FBE$_{0.9}$ with $\mu = 10^{-3}$}
 \label{t3}
 \hspace{-0.8 cm}
\begin{footnotesize}
\begin{tabular}{|c c c ||r r r||r r r||r r r|}
\hline \multicolumn{3}{|c||}{size}& \multicolumn{3}{c||}{iter} &
\multicolumn{3}{c||}{CPU} & \multicolumn{3}{c|}{fval}\\

\multicolumn{1}{|c}{ $m$} & \multicolumn{1}{c}{ $n$} &
\multicolumn{1}{c||}{ $s$}  & \multicolumn{1}{c}{${\rm FBE_{0.5}}$} & \multicolumn{1}{c}{${\rm FBE_{0.7}}$} & \multicolumn{1}{c||}{${\rm FBE_{0.9}}$} &
\multicolumn{1}{c}{${\rm FBE_{0.5}}$} & \multicolumn{1}{c}{${\rm FBE_{0.7}}$} & \multicolumn{1}{c||}{${\rm FBE_{0.9}}$} &
\multicolumn{1}{c}{${\rm FBE_{0.5}}$} & \multicolumn{1}{c}{${\rm FBE_{0.7}}$} & \multicolumn{1}{c|}{${\rm FBE_{0.9}}$} \\

\hline
     720 &    2560 &     160 &  1266 &  1066 &   934 &   4.7 &   4.0 &   3.5 & 1.16014e-01 & 1.16014e-01 & 1.16014e-01\\
    1440 &    5120 &     320 &  1349 &  1135 &   993 &  18.6 &  15.6 &  13.6 & 2.45325e-01 & 2.45325e-01 & 2.45325e-01\\
    2160 &    7680 &     480 &  1434 &  1200 &  1052 &  41.1 &  34.4 &  30.1 & 3.68869e-01 & 3.68869e-01 & 3.68869e-01\\
    2880 &   10240 &     640 &  1459 &  1203 &  1049 &  71.6 &  59.0 &  51.4 & 4.94971e-01 & 4.94971e-01 & 4.94971e-01\\
    3600 &   12800 &     800 &  1500 &  1240 &  1085 & 115.1 &  95.2 &  83.3 & 6.23388e-01 & 6.23388e-01 & 6.23388e-01\\
    4320 &   15360 &     960 &  1499 &  1256 &  1096 & 160.2 & 133.8 & 116.8 & 7.50537e-01 & 7.50537e-01 & 7.50537e-01\\
    5040 &   17920 &    1120 &  1491 &  1234 &  1076 & 215.6 & 177.9 & 155.3 & 8.69880e-01 & 8.69880e-01 & 8.69880e-01\\
    5760 &   20480 &    1280 &  1522 &  1248 &  1090 & 281.9 & 231.5 & 202.1 & 1.01434e+00 & 1.01434e+00 & 1.01434e+00\\
    6480 &   23040 &    1440 &  1560 &  1294 &  1117 & 364.5 & 301.8 & 260.4 & 1.13181e+00 & 1.13181e+00 & 1.13181e+00\\
    7200 &   25600 &    1600 &  1542 &  1285 &  1120 & 452.6 & 378.0 & 329.0 & 1.25184e+00 & 1.25184e+00 & 1.25184e+00\\
\hline
\end{tabular}
\end{footnotesize}
\end{table}

Finally, we consider ill-conditioned problems to further evaluate the performance of {\bf FBE$_{\rm L-BFGS}$}, i.e., the matrix $A$ in \eqref{P-1} is ill-conditioned. Specifically,
as in \cite[Section~5]{YLHX14}, we let $A$ be a randomly over-sampled partial DCT matrix with columns given by
\begin{equation}\label{DCT}
  A_j = \frac1{\sqrt{m}}\cos(2\pi jw/F),~j = 1,...,n,
\end{equation}
where $w\in\R^m$ is a vector with independent entries uniformly sampled from $[0,1]$ and $F$ is a positive integer.

In our experiments below, we compare {\bf FBE$_{\rm L-BFGS}$} with {\bf NPG} for solving \eqref{P-1} on random instances where $A$ is generated by \eqref{DCT}. We first randomly generate a vector $x\in\R^n$ with sparsity $s$ and its entries in the support set are following i.i.d. standard Gaussian distribution. Then we let $b = Ax + \sigma\epsilon$, where $\sigma > 0$ and $\epsilon\in\R^m$ is a random vector with i.i.d. standard Gaussian entries.

In our numerical tests, for each $(m,n,s) = (100i,1500i,10ki)$ for $i=1,1.2$ and $k = 2,3,4$, we generate $30$ random instances as described above with $F = 20$ and $\sigma = 10^{-2}$. 
The computational results corresponding to \eqref{P-1} with $\mu = 10^{-4}$ are presented in Table~\ref{t_dct}, where we denote the algorithm {\bf FBE$_{\rm L-BFGS}$} with $tol = 10^{-6}$ by FBE, the algorithm {\bf NPG} terminated at $tol = 10^{-6}$ and $tol = 10^{-5}$ by {\bf NPG$_{-6}$} and {\bf NPG$_{-5}$} respectively. In the table, we report the time for computing $\lambda_{\max}(A^TA)$ ({\sc t}$_{\lambda_{\max}}$), the number of iterations (iter), CPU time in seconds (CPU) and the terminating function values (fval), averaged over the $30$ random instances. We see that {\bf FBE$_{\rm L-BFGS}$} always outperforms the {\bf NPG}$_{-6}$ in terms of the number of iterations, CPU time and function values. On the other hand, while {\bf NPG}$_{-5}$ takes the fewest iterations and least CPU time, it has much worse function values, indicating that the termination is likely premature.

\begin{table}[h]
\caption{\small Results for randomly over-sampled partial DCT matrix $A$, $\mu = 10^{-4}$}
 \label{t_dct}
 \hspace{-0.8 cm}
\begin{footnotesize}
\begin{tabular}{|c c c ||r||r r r||r r r||r r r|}
\hline \multicolumn{3}{|c||}{size}& & \multicolumn{3}{c||}{iter} &
\multicolumn{3}{c||}{CPU} & \multicolumn{3}{c|}{fval}\\

\multicolumn{1}{|c}{ $m$} & \multicolumn{1}{c}{ $n$} &
\multicolumn{1}{c||}{ $s$} & {\sc t$_{\lambda_{\max}}$} & \multicolumn{1}{c}{\sc FBE} & \multicolumn{1}{c}{${\rm NPG_{-6}}$} & \multicolumn{1}{c||}{${\rm NPG_{-5}}$} &
\multicolumn{1}{c}{\sc FBE} & \multicolumn{1}{c}{${\rm NPG_{-6}}$} & \multicolumn{1}{c||}{${\rm NPG_{-5}}$} &
\multicolumn{1}{c}{\sc FBE} & \multicolumn{1}{c}{${\rm NPG_{-6}}$} & \multicolumn{1}{c|}{${\rm NPG_{-5}}$} \\

\hline
     100 &    1500 &      20 &   0.0 & 3e+04 & 2e+05 & 1e+04 &  18.7 &  25.8 &   1.6 & 1.5844e-03 & 1.5873e-03 & 1.8627e-03\\
     100 &    1500 &      30 &   0.0 & 3e+04 & 2e+05 & 1e+04 &  18.9 &  30.7 &   1.9 & 2.1274e-03 & 2.1325e-03 & 2.4659e-03\\
     100 &    1500 &      40 &   0.0 & 3e+04 & 3e+05 & 2e+04 &  21.3 &  36.7 &   2.3 & 2.7476e-03 & 2.7521e-03 & 3.1333e-03\\
     120 &    1800 &      24 &   0.0 & 3e+04 & 2e+05 & 1e+04 &  25.2 &  36.3 &   2.4 & 2.0271e-03 & 2.0306e-03 & 2.3160e-03\\
     120 &    1800 &      36 &   0.0 & 4e+04 & 3e+05 & 2e+04 &  32.9 &  46.6 &   2.7 & 2.6018e-03 & 2.6053e-03 & 2.9544e-03\\
     120 &    1800 &      48 &   0.0 & 3e+04 & 3e+05 & 2e+04 &  31.2 &  49.2 &   3.4 & 3.0467e-03 & 3.0505e-03 & 3.4297e-03\\
\hline
\end{tabular}
\end{footnotesize}
\end{table}

\section{Concluding remarks}\label{sec:conclusion}

In this paper, we further studied the forward-backward envelope first introduced in \cite{FBE14} and \cite{FBE16}, and established sufficient conditions for the envelope to be a level-bounded and KL function with an exponent of $\frac12$. We also illustrated how to solve a class of difference-of-convex regularized least squares problem via a suitable forward-backward envelope. This opens up the possibility of applying techniques for smooth unconstrained optimization to this special class of difference-of-convex programming problems.

One open question is how to extend the concept of forward-backward envelope to \eqref{P0} in the case when $f$ only has locally Lipschitz continuous gradients; this instance also arises frequently in applications. Another open question is to identify more classes of (nonconvex) problems that satisfy the error bound condition (Assumption~\ref{assum1}). In particular, it is still unknown to us whether the objective function in \eqref{P4} satisfies the error bound condition.

\appendix
\section{Closed form formula for NPG subproblems}\label{AppendixA}

In this appendix, we derive a closed form formula for the following problem,
\begin{equation}\label{l1l2subproblem}
  \min_{x} \ \frac{1}{2}\|x - y\|^2 + \mu_1 \|x\|_1 - \mu_2 \|x\|,
  \end{equation}
where $\mu_1 \ge \mu_2 > 0$ and $y\in \R^n$ is given. It is easy to see that \eqref{l1l2subproblem} covers \eqref{subproblem2} as a special case and hence we will obtain a closed form formula
for these NPG subproblems. To proceed with our derivation, we first establish the following lemma.
\begin{lemma}\label{A_lem1}
  Let $v\in \R^n$ and define ${\cal I} := \{i:\; v_i < 0\}$. Consider the optimization problem
  \begin{equation}\label{prelim}
  \min_{\|x\|= 1, x\ge 0} v^Tx
  \end{equation}
  \begin{enumerate}[{\rm (i)}]
    \item Suppose that ${\cal I}\neq \emptyset$. Then an optimal solution $x^*$ of \eqref{prelim} is given by
    \begin{equation}\label{xform1}
    x^*_i = \begin{cases}
      -\frac{v_i}{\|v_{{\cal I}}\|} & {\rm if}\ i \in {\cal I},\\
      0 & {\rm otherwise},
    \end{cases}
    \end{equation}
    where $v_{\cal I}$ is the subvector of $v$ indexed by ${\cal I}$.
    \item Suppose that ${\cal I} = \emptyset$ and take an $i_*\in \{i:\; v_i = \min_k v_k\}$. Then an optimal solution $x^*$ of \eqref{prelim} is given by
    \begin{equation}\label{xform2}
      x^*_i = \begin{cases}
        1 & {\rm if}\ i = i_*,\\
        0 & {\rm otherwise}.
      \end{cases}
    \end{equation}
  \end{enumerate}
\end{lemma}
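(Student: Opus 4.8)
The plan is to verify feasibility of the proposed points and then prove optimality by an elementary lower-bounding argument in each case, exploiting the sign pattern of $v$. (One could alternatively check the KKT conditions of \eqref{prelim}, but the direct comparison argument is shorter and self-contained.)

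First I would treat case (i). One checks immediately that the point $x^*$ in \eqref{xform1} satisfies $x^*\ge 0$, since $v_i<0$ for $i\in{\cal I}$, and $\|x^*\|^2=\sum_{i\in{\cal I}}v_i^2/\|v_{\cal I}\|^2=1$, so $x^*$ is feasible; moreover $v^Tx^*=-\sum_{i\in{\cal I}}v_i^2/\|v_{\cal I}\|=-\|v_{\cal I}\|$. For the matching lower bound, take any feasible $x$. Since $v_i\ge 0$ and $x_i\ge 0$ for $i\notin{\cal I}$, those terms are nonnegative and may be dropped, so $v^Tx\ge\sum_{i\in{\cal I}}v_ix_i\ge -\|v_{\cal I}\|\,\|x_{\cal I}\|\ge -\|v_{\cal I}\|$, where the second inequality is Cauchy--Schwarz and the last uses $\|x_{\cal I}\|\le\|x\|=1$. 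Hence $v^Tx\ge v^Tx^*$, establishing (i).

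For case (ii), we have $v\ge 0$ and $v_{i_*}=\min_k v_k\ge 0$. The point $x^*$ in \eqref{xform2} is the $i_*$-th standard basis vector, which is clearly feasible with $v^Tx^*=v_{i_*}$. For any feasible $x$, using $v_i\ge v_{i_*}\ge 0$ and $x_i\ge 0$ gives $v^Tx\ge v_{i_*}\sum_i x_i=v_{i_*}\|x\|_1\ge v_{i_*}\|x\|_2=v_{i_*}$, where the last step uses $\|x\|_1\ge\|x\|_2=1$ together with $v_{i_*}\ge 0$. Thus $v^Tx\ge v^Tx^*$, establishing (ii).

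There is no genuine obstacle here; the only points requiring care are keeping the inequality directions consistent — specifically, using $\|x_{\cal I}\|\le\|x\|=1$ in case (i) so that shifting mass to coordinates outside ${\cal I}$ can only raise the objective, and using the nonnegativity of $v_{i_*}$ (which is precisely the hypothesis ${\cal I}=\emptyset$) to pass from $\|x\|_1$ back down to $\|x\|_2$ in case (ii).
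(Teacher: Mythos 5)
Your proof is correct, and it takes a genuinely different route from the paper's. The paper argues via relaxations: in case (i) it relaxes the sphere constraint $\|x\|=1$ to the ball $\|x\|\le 1$ and uses exchange arguments (zeroing out coordinates with $v_i\ge 0$, then pushing the norm to $1$) to pin down the structure of an optimizer of the relaxation, which is then feasible for the original problem; in case (ii) it relaxes to $1\le e^Tx\le\sqrt{n}$, $x\ge 0$ and uses a scaling argument to force $e^Tx^*=1$. You instead exhibit the candidate point, compute its objective value ($-\|v_{\cal I}\|$ in case (i), $v_{i_*}$ in case (ii)), and prove a matching lower bound for every feasible point directly --- via Cauchy--Schwarz together with $\|x_{\cal I}\|\le\|x\|=1$ in case (i), and via $v^Tx\ge v_{i_*}\|x\|_1\ge v_{i_*}\|x\|_2$ in case (ii). Your argument is more elementary and self-contained: it avoids invoking existence of optimizers for the relaxations and the somewhat informal ``one can further decrease the objective'' exchange reasoning, and it yields the optimal value explicitly as a by-product. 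The paper's relaxation approach, on the other hand, explains \emph{why} the optimizer has the stated support structure rather than just verifying it. Both handle the key sign issues correctly; your explicit remark that $v_{i_*}\ge 0$ is needed to pass from $\|x\|_1$ down to $\|x\|_2$ is exactly the right point of care.
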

\begin{proof}
  It is clear that an optimal solution of \eqref{prelim} exists.

  Suppose first that ${\cal I} \neq \emptyset$. In this case, we consider the following relaxation of \eqref{prelim}:
  \begin{equation}\label{prelim_relax}
  \min_{\|x\|\le 1, x\ge 0} v^Tx
  \end{equation}
  Let $x^*$ be an optimal solution of \eqref{prelim_relax}. Then it is easy to see that
  for any $i$ that corresponds to $v_i > 0$, we must have $x^*_i = 0$; because otherwise, one can zero out these $x_i^*$ to obtain a feasible solution
  with a strictly smaller objective value. Next, since ${\cal I}$ is nonempty,
  it is not hard to see that one must have $x^*_i = 0$ for all $i$ corresponding to $v_i = 0$, because otherwise, one can further decrease the objective value by setting these entries to zero while increasing some $x_i^*$ with $i\in {\cal I}$. Thus, $x^*_i = 0$ for all $i$ that correspond to $v_i \ge 0$. Finally, it must hold that $\|x^*\| = 1$ because otherwise one can further increase $x_i^*$ for $i\in {\cal I}$ to decrease the objective value. Thus, we conclude that $x^*$ must take the form of \eqref{xform1}. Since this $x^*$ is optimal for \eqref{prelim_relax}
  and is also feasible for \eqref{prelim}, it must also be optimal for \eqref{prelim}.

  Next, suppose that ${\cal I}$ is empty. This means that $v$ is a nonnegative vector. Observe that for any $x$ feasible for \eqref{prelim} so that $\|x\| = 1$, we have
  \begin{equation*}
    1 = \|x\|^2 \le (e^Tx)^2 \le n \|x\|^2 = n,
  \end{equation*}
  showing that the following optimization problem is a relaxation of \eqref{prelim}:
  \begin{equation}\label{prelim_relax2}
  \min_{1\le e^Tx \le \sqrt{n}, x\ge 0} v^Tx.
  \end{equation}
  Since $v$ is nonnegative, a simple scaling argument indicates that any optimal solution $x^*$ of \eqref{prelim_relax2} has to satisfy $e^Tx^* = 1$. In particular, this shows that the optimal value of \eqref{prelim_relax2} is given by $\min_i v_i$ and the $x^*$ given by \eqref{xform2} is an optimal solution. Since this $x^*$ is clearly feasible for \eqref{prelim}, it is also optimal for \eqref{prelim}. This completes the proof.
\end{proof}

The next proposition gives an explicit formula for a minimizer of \eqref{l1l2subproblem}.

\begin{proposition}
  Let ${\cal I} = \{i:\; \mu_1 < |y_i|\}$.
  \begin{enumerate}[{\rm (i)}]
    \item Suppose that ${\cal I}$ is nonempty. Then a solution $x^*$ of \eqref{l1l2subproblem}
  is given by
  \begin{equation*}
    x^*_i =
    \begin{cases}
    {\rm sgn}(y_i)(\mu_2 + \||y_{\cal I}|-\mu_1 e_{\cal I}\|)\frac{|y_i| - \mu_1}{\||y_{\cal I}|-\mu_1 e_{\cal I}\|} & {\rm if}\ i \in {\cal I},\\
    0 & {\rm otherwise},
    \end{cases}
  \end{equation*}
  where $y_{\cal I}$ is the subvector of $y$ indexed by ${\cal I}$, the absolute value $|y_{\cal I}|$ is taken componentwise, and $e_{\cal I}$ is the vector of all ones of dimension $|{\cal I}|$.
  \item Suppose that ${\cal I}$ is empty and take an $i_*\in \{i:\; \mu_1 - |y_i| = \min_k\{\mu_1 - |y_k|\}\}$. Then a solution $x^*$ of \eqref{l1l2subproblem}
  is given by
  \begin{equation*}
      x^*_i = \begin{cases}
        {\rm sgn}(y_{i_*})\max\{\mu_2 - (\mu_1-|y_{i_*}|),0\} & {\rm if}\ i = i_*,\\
        0 & {\rm otherwise}.
      \end{cases}
    \end{equation*}
  \end{enumerate}
%
%
\end{proposition}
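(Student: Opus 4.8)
The plan is to collapse \eqref{l1l2subproblem} to a one‑dimensional problem by first removing signs and then separating the optimization over the ``direction'' and the ``magnitude'' of $x$, using Lemma~\ref{A_lem1} to supply the optimal direction.

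\textbf{Step 1 (sign reduction).} First I would argue that it suffices to solve
\[
\min_{x\ge 0}\ \tfrac12\|x - |y|\|^2 + \mu_1 e^Tx - \mu_2\|x\|,
\]
where $|y|$ is taken componentwise and $e$ is the all‑ones vector, and then recover a solution of \eqref{l1l2subproblem} via $x^*_i={\rm sgn}(y_i)\,\bar x_i$ with $\bar x$ a solution of the reduced problem. Indeed, for any $x$, replacing $x_i$ by ${\rm sgn}(y_i)|x_i|$ leaves $\|x\|_1$ and $\|x\|$ unchanged and does not increase $\|x-y\|^2$ (since $y_ix_i\le|y_i||x_i|$); moreover, for an index $i$ with $y_i=0$, setting $x_i=0$ changes the objective by at most $-\tfrac12 x_i^2-(\mu_1-\mu_2)|x_i|\le 0$, where one uses $\sqrt{\|x\|^2-x_i^2}\ge\|x\|-|x_i|$ and the assumption $\mu_1\ge\mu_2$. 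Hence an optimal $x$ can be taken with $x_iy_i\ge 0$ for all $i$ and $x_i=0$ whenever $y_i=0$, and for such $x$ the objective of \eqref{l1l2subproblem} equals that of the reduced problem evaluated at $(|x_i|)_i$.

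\textbf{Step 2 (direction/magnitude split and the one‑dimensional problem).} For $x\ge 0$ set $r=\|x\|$, so $\|x\|_1=e^Tx$ and the reduced objective equals $\tfrac12 r^2-\mu_2 r+v^Tx+\tfrac12\||y|\|^2$ with $v:=\mu_1 e-|y|$. For fixed $r\ge 0$, by positive homogeneity $\min\{v^Tx:\ x\ge 0,\ \|x\|=r\}=r\,m$, where $m:=\min_{\|d\|=1,\,d\ge0}v^Td$ is exactly the quantity computed in Lemma~\ref{A_lem1} with ${\cal I}=\{i:v_i<0\}=\{i:\mu_1<|y_i|\}$: if ${\cal I}\neq\emptyset$ then $m=-\|v_{\cal I}\|=-\||y_{\cal I}|-\mu_1 e_{\cal I}\|$, attained at the $d^*$ of \eqref{xform1}; if ${\cal I}=\emptyset$ then $m=\min_k v_k=\mu_1-|y_{i_*}|$, attained at the $d^*$ of \eqref{xform2}. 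Substituting, the reduced problem becomes $\min_{r\ge0}\ \tfrac12 r^2+(m-\mu_2)r+\tfrac12\||y|\|^2$, a convex quadratic on $[0,\infty)$ with minimizer $r^*=\max\{\mu_2-m,0\}$, and the associated reduced minimizer is $\bar x=r^*d^*$. In case (i), $r^*=\mu_2+\||y_{\cal I}|-\mu_1 e_{\cal I}\|$ and $d^*_i=(|y_i|-\mu_1)/\||y_{\cal I}|-\mu_1 e_{\cal I}\|$ for $i\in{\cal I}$; in case (ii), $r^*=\max\{\mu_2-(\mu_1-|y_{i_*}|),0\}$ and $d^*=e_{i_*}$. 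Then $x^*_i={\rm sgn}(y_i)\bar x_i$ gives precisely the two displayed formulas.

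\textbf{Main obstacle.} Steps 2 and 3 are routine once Lemma~\ref{A_lem1} is in hand. The delicate point is Step 1: since $-\mu_2\|x\|$ is concave, the usual ``shrink toward the origin'' argument is not immediate, and one genuinely needs $\mu_1\ge\mu_2$ (together with $\sqrt{a^2-b^2}\ge a-b$ for $a\ge b\ge 0$) to zero out the components with $y_i=0$. One should also verify that the reduced minimizer $\bar x$ produced above already has vanishing entries at every $i$ with $y_i=0$, so that the recovery formula $x^*_i={\rm sgn}(y_i)\bar x_i$ loses nothing; this holds because such an $i$ is never in ${\cal I}$, and if such an $i$ happened to be the index $i_*$ of case (ii) then $r^*=\max\{\mu_2-\mu_1,0\}=0$.
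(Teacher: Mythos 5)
Your proof is correct and follows essentially the same route as the paper: strip the signs, decompose $x$ into a nonnegative unit direction and a radius, solve the direction subproblem via Lemma~\ref{A_lem1}, and optimize the resulting scalar quadratic in the radius. The only (immaterial) differences are that you perform the two inner minimizations in the opposite order (direction first, then radius, whereas the paper eliminates $r$ first and then invokes monotonicity of $t\mapsto-\tfrac12(\max\{\mu_2-t,0\})^2$), and that your up-front treatment of the indices with $y_i=0$ is somewhat more explicit than the paper's ``$\alpha^*={\rm sgn}(y)$ at optimality'' step.
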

\begin{proof}
  Using a substitution $x = \alpha\circ w$, where $\alpha \in \{-1,1\}^n$, $w\ge 0$ and $\circ$ denotes entrywise product, and expanding the quadratic term,
  we see that problem \eqref{l1l2subproblem} can be equivalently
  written as
  \begin{equation*}
    \min_{\alpha \in \{-1,1\}^n,w\ge 0} \frac{1}{2}\|w\|^2 - (\alpha\circ y)^Tw + \mu_1 e^Tw - \mu_2 \|w\|.
  \end{equation*}
  Applying a further substitution $w = ru$ with a number $r\ge 0$ and a nonnegative vector $\|u\|= 1$, the above problem can be further reformulated as
  \begin{equation*}
    \min_{
    \begin{subarray}\
    \alpha \in \{-1,1\}^n\\
    u\ge 0,\|u\|= 1
    \end{subarray}
    }\min_{r\ge 0} \frac{1}{2}r^2 - r(\alpha\circ y)^Tu + \mu_1r e^Tu - \mu_2 r.
  \end{equation*}
  It is easy to check that the inner minimization is
  attained at $r = \max\{\mu_2 - (\mu_1e-\alpha\circ y)^Tu,0\}$. Plugging this back, the optimization problem
  now becomes
  \begin{equation*}
    \min_{
    \begin{subarray}\
    \alpha \in \{-1,1\}^n\\
    u\ge 0,\|u\|= 1
    \end{subarray}
    }-\frac{1}{2}(\max\{\mu_2 - (\mu_1e-\alpha\circ y)^Tu,0\})^2.
  \end{equation*}
  Since the function $t\mapsto -\frac{1}{2}(\max\{\mu_2 - t,0\})^2$ is nondecreasing,
  to obtain an optimal solution of the above optimization problem, one only needs to consider the problem
  \begin{equation*}
    \min_{
    \begin{subarray}\
    \alpha \in \{-1,1\}^n\\
    u\ge 0,\|u\|= 1
    \end{subarray}
    }(\mu_1e-\alpha\circ y)^Tu.
  \end{equation*}
  For this problem, since $u\ge 0$, we must have $\alpha^* = {\rm sgn}(y)$ at optimality. This further reduces the above problem to
  \begin{equation*}
    \min_{u\ge 0,\|u\|=1}(\mu_1e-|y|)^Tu.
  \end{equation*}
  The conclusion of this proposition now follows from this observation, Lemma~\ref{A_lem1}, the facts that $x = \alpha\circ(ru)$ with $r = \max\{\mu_2 - (\mu_1e-\alpha\circ y)^Tu,0\}$ and $\alpha^* = {\rm sgn}(y)$.
\end{proof}

\end{document}